\newtheorem{assumption}{Assumption}[section]
\newtheorem{thm}{Theorem}[section]
\newtheorem{lem}{Lemma}[section]
\newcommand{\changed}[1]{{\color{black}#1}}
\def\mesh{\mathbb T}
\def\cell{\kappa}
\def\refcell{\widehat{\cell}}
\def\ip{\delta_{\text{IP}}}
\newcommand{\gi}{g}
\newcommand{\gj}{{g'}}
\title[Multilevel Schwarz Reaction-Diffusion]{Multilevel Schwarz preconditioners for singularly perturbed symmetric reaction-diffusion systems}
\author{Jos\'e Pablo Lucero Lorca \and Guido Kanschat}
\thanks{Universit\'e de Gen\`eve, S\'ection de Math\'ematiques ({\tt pablo.lucero@unige.ch})}
\thanks{Interdisciplinary Center for Scientific Computing (IWR), Heidelberg University,({\tt kanschat@uni-heidelberg.de}), supported in part by DFG through priority program 1648 SPPEXA and by the special program ``Numerical Analysis of Complex PDE Models in the Sciences'' of the Erwin Schrödinger International Institute for Mathematics and Physics (ESI) in Vienna. Computations were run in part on the bwForDev cluster at Heidelberg University.}
\begin{document}
\maketitle
\begin{abstract}
  We present robust and highly parallel multilevel non-overlapping Schwarz preconditioners, to solve an interior penalty discontinuous Galerkin finite element discretization of a system of steady state, singularly perturbed reaction-diffusion equations with a singular reaction operator, using a GMRES solver. We provide proofs of convergence for the two-level setting and the multigrid V-cycle as well as numerical results for a wide range of regimes.

  \smallskip
  \noindent \textbf{Keywords.}multilevel, Schwarz, preconditioner, multigrid, reaction, diffusion, discontinuous, Galerkin

  \smallskip
  \noindent
  65N55, 65N30, 65J10, 65F08
\end{abstract}

\section{Introduction}
In this paper, we present analysis and numerical experiments of two-level Schwarz preconditioners and their multilevel versions for an interior penalty discontinuous Galerkin finite element discretization of a system of reaction-diffusion equations \changed{not requiring special mesh structures resolving boundary layers.} Our focus is on the singularly perturbed case, where the reaction system has an inertial subspace. \changed{We use a massively parallel smoother as in \cite{Dryja2016}, therefore we provide new convergence estimates for elliptic and reaction-diffusion systems including quadrilateral and hexahedral meshes.}
\changed{The} estimates are robust with respect to the parameters of the system and the experiments confirm the efficiency of the method. 



Reaction-diffusion systems arise in a variety of physical, chemical and biological contexts. \changed{Due to conservation of mass, these systems are all characterized by an inertial subspace (an inertial manifold in the nonlinear case) on which the system reduces to  an almost reaction free diffusion equation. Nevertheless, the contributions orthogonal to this subspace are still important in applications and often cannot be neglected. Thus, numerical methods have to deal with long-ranged coupling in the inertial subspace as well as short-ranged behavior in its complement in an efficient way.}
One particular area where these models are widely used is radiation transport, where the reaction-diffusion equation is an approximation of Boltzmann's linear transport equation that becomes relevant in the so called \emph{diffusive} regimes, which are characterized by small mean free paths compared to the size of the domain. In these regimes the transport equation is nearly singular and its solution in the interior of the computational domain is close to the solution of a reaction-diffusion equation \cite{Manteuffel1998}.

We employ the interior penalty discontinuous Galerkin (IP-DG) method to discretize the singularly perturbed reaction-diffusion system in steady state. IP-DG \cite{Arnold2002,Nitsche1971,Baker1977,Arnold1982,Wheeler1978} methods are particularly interesting to solve reaction-diffusion equations since \changed{oscillations at boundary and interior layers (Gibbs phenomenon) are much less notable than with standard conforming finite elements for singularly perturbed problems \cite{LewBuscaglia2008}}. \changed{Thus, they produce better approximations if such layers are not resolved.} Using this discretization the reaction operator involves only volume integrals with no coupling between cells. Therefore, \changed{we expect that IP-DG is particularly well suited for Schwarz methods since contributions of the reaction term are included inside the local solvers.}

We solve the discrete problem with a GMRES solver using multilevel preconditioners with nonoverlapping Schwarz smoothers, effectively solving a full reaction-diffusion problem in each cell (see \cite{KanschatLucero16}). Convergence estimates for such methods applied to \changed{pure} diffusion problems have been developed in \cite{FengKarakashian2001}. There, it is assumed that the subdomains defining the decomposition of the fine space are unions of coarse cells. It is more efficient though to employ subdomains based on fine cells and recently, an analysis has been shown covering this case~\cite{Dryja2016}. However, its application does not extend to quadrilaterals and hexahedra, since the proof uses $P_1$ nonconforming interpolant and enriching operators for simplices, see~\cite{Brenner2003}. We provide an extension for quadrilaterals and hexahedra without such restrictions.

\changed{Subspace correction methods for singularly perturbed reaction-diffusion equations have been studied in~\cite{Boglaev1998,MacMullen01}. Both articles rely on strictly positive reaction terms and use Shishkin-type meshes for robust discretization of boundary layers. This technique is extended to singularly perturbed reaction-diffusion systems in~\cite{Stephens2009}. Also there, the authors assume a strictly positive definite reaction system, thus being able to make the assumption of an exponential boundary layer, but excluding the presence of an inertial subspace.}

\changed{As stated above, inertial subspaces can be important in applications. Since they do not exhibit boundary layers, Shishkin-type meshes will not be adapted to all solution components. We also do not want to necessarily have to resolve boundary layers, albeit not only the inertial part may be of importance. Thus, we do not solve the limit problem and propose a method which is robust in the sense that its iteration counts are uniformly bounded with respect to reaction parameters and mesh size.
}

Our main results are the proof of the stable decomposition shown in lemma \ref{lem:laplacianstabledecomposition} to obtain convergence estimates for two-level preconditioners, and the multigrid V-cycle preconditioner estimate in theorem \ref{thm:multigrid}.
The paper is structured as follows: in section \ref{sec:diffusionequation} we introduce the continuous problem and the IP-DG discretization. In section \ref{sec:solver} we develop two-level Schwarz and multigrid preconditioners and prove convergence estimates. Finally, in section \ref{sec:numericalexperiments} we demonstrate the efficiency of the proposed methods with experimental results.

\section{Model problem}\label{sec:diffusionequation}

We consider the following system of $G$ steady state reaction-diffusion equations with a singularly perturbed reaction term
\begin{align}\label{eqn:DiffusionEquation}
\begin{aligned}
-\nabla \cdot (\eta_\gi \nabla u_\gi) + \frac{1}{\varepsilon} \sum_{\gj=1}^G \changed{\sigma_{\gi \gj} \left(u_\gi - u_\gj  \right)} &= S_\gi && \text{ in } \Omega \text{, with } \gi=1\dots G,
\end{aligned}
\end{align}
where $\gi$ is the \emph{group} index identifying each reacting \emph{substance}, $\eta_\gi$ is the diffusion coefficient for each group $\gi$, $\varepsilon$ is a perturbation parameter defining the relative size of the reaction with respect to the diffusion term, $\Omega$ is a convex polyhedral domain in $\mathbb{R}^d$ with $d=2,3$ and $S_\gi$ is a known source. 

The equation is provided with the boundary conditions
\begin{align*}
\begin{aligned}
u_\gi = 0 && \text{ on } \Gamma \text{, with } \gi=1\dots G,
\end{aligned}
\end{align*}
where $\Gamma$ is the boundary of $\Omega$.

We assume $\eta_g,\sigma_{\gi\gj} \in C^{\infty}(\Omega)$ and $\sigma_{\gi\gj} \ge 0$, for all $\gi,\gj=1\dots G$ and there exists $C>0$ such that $\eta_\gi \ge C$ in $\Omega$. Furthermore, we assume that the reaction matrix is symmetric \changed{and singular} since
\begin{gather}
    \sigma_{gg} = -\sum_{g' \ne g} \sigma_{gg'}\qquad\forall g=1\dots G.
\end{gather}

We introduce the Hilbert spaces
\begin{align*}
\mathcal{V} = \left(H_0^1(\Omega)\right)^G &, & \mathcal{H} = \left(L^2(\Omega)\right)^G,
\end{align*}
where $H^1_0(\Omega)$ is the standard Sobolev space with zero boundary
trace. They are provided with inner products
\begin{align}\label{eqn:InnerProducts}
(u,v)_\mathcal{V} = \sum_{g=1}^G \left(\eta_\gi \nabla u_\gi,\nabla v_\gi \right)_{L^2(\Omega)} &, & (u,v)_{\mathcal{H}} = \sum_{\gi=1}^G \left(u_\gi,v_\gi \right)_{L^2(\Omega)},
\end{align} 
and norms
\begin{align*}
\|u\|_{\mathcal{V}}^2 = (u,u)_\mathcal{V} &, & \|u\|_{\mathcal{H}}^2 = (u,v)_{\mathcal{H}}.
\end{align*}
The weak form of problem~\eqref{eqn:DiffusionEquation} is: find $u \in \mathcal{V}$ such that
\begin{equation}
  \label{eqn:VariationalProblem}
  \mathscr{A}(u,v) = (f,v)_{\mathcal{H}},
\end{equation}
where $f \in \mathcal{H}$ and the bilinear form is
\begin{gather}\label{eqn:BilinearForm}
\begin{aligned}
\mathscr{A}(u,v) =& \sum_{\gi = 1}^G \int_\Omega \eta_\gi \nabla u_\gi \cdot \nabla v_\gi dx +  \frac{1}{\varepsilon} \sum_{\gi = 1}^G \sum_{\gj = 1}^G \int_\Omega \sigma_{\gi\gj} \left(u_\gi -  u_\gj \right) v_\gi dx \\
=& \left(\boldsymbol{D} \nabla u, \nabla v\right)_{\mathcal{H}} + \frac{1}{\varepsilon} \left(\boldsymbol{\Sigma} u, v\right)_{\mathcal{H}} = \left(u,v\right)_{\mathcal{V}} + \frac{1}{\varepsilon} \left(\boldsymbol{\Sigma} u, v\right)_{\mathcal{H}}.
\end{aligned}
\end{gather}
The second line uses the vector notation
\begin{align*}
u &= (u_1,\dots,u_G)^\intercal, & v &= (v_1,\dots,v_G)^\intercal,\\
\boldsymbol{D} &= \text{diag}(\eta_1,\dots,\eta_G), \text{ and } & \boldsymbol{\Sigma} =& 
\begin{psmallmatrix} 
\sigma_{11} & \dots & -\sigma_{G1} \\
\dots & \dots & \dots \\
-\sigma_{1G} & \dots & \sigma_{GG} \\
\end{psmallmatrix}.
\end{align*}

According to our assumptions, the reaction matrix $\boldsymbol{\Sigma}$ is a symmetric, weakly diagonally dominant singular M-matrix\footnote{We use the term \emph{singular} M-matrix, following the terminology in \cite[p. 119]{Horn1991}, to denote a matrix that can be expressed as $\boldsymbol{A} = s\boldsymbol{I} - \boldsymbol{B}$, where all the elements in $\boldsymbol{B}$ are nonnegative, $s$ is equal to the maximum of the moduli of the eigenvalues of $\boldsymbol{B}$, and $\boldsymbol{I}$ is an identity matrix.} with zero column and row sum. By the Perron-Frobenius theorem, this implies $\boldsymbol{\Sigma}$ is singular with rank less than $G$ and by the Ger\v{s}gorin circle theorem, all eigenvalues are nonnegative. 

Physically, the properties of $\boldsymbol{\Sigma}$ ensure substance conservation and the absence of sinks inside the domain. In a radiation transport context, this implies that the system can have no particle absorption and particles only disappear when they reach the boundary. The presence of absorption would imply all eigenvalues are positive and $\boldsymbol{\Sigma}$ would be invertible.

Under the assumptions on the parameters of equation \eqref{eqn:DiffusionEquation}, the bilinear form $\mathscr{A}(u,v)$ is continuous and $\mathcal{V}$-coercive relatively to $\mathcal{H}$ (see \cite[\S2.6]{DautrayLions1984}), i. e. there exist constants $\gamma_\mathscr{A},C_\mathscr{A} > 0$ such that
\begin{align*} 
\mathscr{A}(u,u) \ge \gamma_\mathscr{A} \|u\|_{\mathcal{V}}^2  &, & \mathscr{A}(u,v) \le C_\mathscr{A}\|u\|_{\mathcal{V}}\|v\|_{\mathcal{V}}.
\end{align*}
where we remark that even though $\gamma_\mathscr{A}$ is independent of $\varepsilon$, $C_\mathscr{A}$ is not. 

From Lax-Milgram's theorem, the variational problem admits a unique solution in $\mathcal{V}$.

\subsection{Discrete problem}\label{sec:discreteproblem}

We apply a IP-DG discretization to the bilinear form $\mathscr{A}(\cdot,\cdot)$ \cite{Arnold1982}. Let $\mesh_h$ be a subdivision of the domain $\Omega$ into quadrilaterals or hexahedra $\cell$, such that each cell $\cell$ is described by a $d$-linear mapping $\Phi_\cell$ from the reference cell $\refcell = [0,1]^d$ onto itself. Conformity of the faces of mesh cells is not required, but we assume local quasi-uniformity and shape regularity in the sense that the Jacobians of $\Phi_\cell$ and their inverses are uniformly bounded.

Let $\mathbb{Q}_p$ be the space of tensor product polynomials of degree up to $p$ in each coordinate direction. Then, define the mapped space $\mathbb{Q}_p(\cell)$ on the cell $\cell$ as the pull-back of functions under $\Phi_\cell$. The vector-valued, discontinuous function space $V_h$ is then defined as
\begin{gather}
V_h = \bigl\{ v\in \mathcal{H} \big| v_{|\cell} \in \mathbb{Q}_p^G(\cell) \bigr\}.
\end{gather}

Let $\mathbb{F}_h^I$ be the set of all interior faces of the mesh and $\mathbb{F}_h^B$ the set of all boundary faces. Let  $\cell_{+},\cell_{-} \in \mesh_h$ be two mesh cells with a joint face $F \in \mathbb{F}_h^I$, and let $u_{+}$ and $u_{-}$ be the traces of functions $u$ on $F$ from $\cell_{+}$ and $\cell_{-}$ respectively. On an \changed{interior} face $F$, we define the \emph{averaging} operator as
\begin{align*}
\{\!\!\{u\}\!\!\} = \frac{u_{+}+u_{-}}{\sqrt2}.
\end{align*}
\changed{On the boundary, there is only a single value and we set $\{\!\!\{u\}\!\!\} = u$.}

We introduce the following definition of mesh integrals
\begin{align*}
    \int_{\mesh_h} u \, dx = \sum_{\kappa \in \mesh_h} \int_\kappa u\, dx
\end{align*}
and integrals over $\mathbb{F}_h^I$ and $\mathbb{F}_h^B$ are defined accordingly.
The interior penalty (IP) bilinear form for the scalar Laplacian, as described in \cite{Arnold1982}, reads
\begin{multline}\label{eqn:DGIPLaplacian}
  \alpha_h(u,v) = \int_{\mesh_h} \nabla u \cdot \nabla v \, dx
  - \int_{\mathbb{F}_h^I \cup \mathbb{F}_h^B} \left( \{\!\!\{ u \boldsymbol{n} \}\!\!\} \cdot \{\!\!\{\nabla v \}\!\!\} + \{\!\!\{\nabla u \}\!\!\} \cdot \{\!\!\{ v \boldsymbol{n} \}\!\!\} \right) \,ds \\
+ \int_{\mathbb{F}_h^I \cup \mathbb{F}_h^B} \frac{\ip}{h} \{\!\!\{ u \boldsymbol{n} \}\!\!\} \cdot  \{\!\!\{ v \boldsymbol{n} \}\!\!\} \,ds
\end{multline}
where $h$ is the minimum cell diameter adjacent to the face, $u\boldsymbol{n} = \left(u_1 \boldsymbol{n},u_2 \boldsymbol{n},\ldots,u_G \boldsymbol{n}\right)^\intercal$ and $\nabla u = \left(\nabla u_1, \nabla u_2,\ldots,\nabla u_G \right)^\intercal$. We have replaced the jump operator used in \cite{Arnold1982} for the equivalent expression: $\sqrt2\{\!\!\{u \boldsymbol{n} \}\!\!\} = u_{+}\boldsymbol{n}_{+} + u_{-} \boldsymbol{n}_{-}$. Coercivity and continuity are proven in \cite{Arnold1982} under the assumption that $\ip$ is sufficiently large. We will assume in the following that this holds true.

We then define the discrete bilinear form, including the diffusion coefficients follows
\begin{multline}\label{eqn:DGIPoriginal}
  a_h(u,v) = \int_{\mesh_h} \boldsymbol{D} \nabla u \cdot \nabla v\, dx
  + \int_{\mathbb{F}_h^I \cup \mathbb{F}_h^B} 4 \frac{\ip}{h} \{\!\!\{ \boldsymbol{D} (u \boldsymbol{n})\}\!\!\} \cdot \{\!\!\{ v \boldsymbol{n} \}\!\!\} ds
  \\
  - \int_{\mathbb{F}_h^I \cup \mathbb{F}_h^B} 2 \left( \{\!\!\{ u \boldsymbol{n} \}\!\!\} \cdot \{\!\!\{\boldsymbol{D} \nabla v \}\!\!\} + \{\!\!\{\boldsymbol{D} \nabla u \}\!\!\} \cdot \{\!\!\{ v \boldsymbol{n} \}\!\!\} \right) \,ds
  .
\end{multline}
Under the assumptions made in the previous sections and $\ip$ sufficiently large, $a_h(u,v)$ is coercive and continuous.

Using~\eqref{eqn:DGIPoriginal}, our IP-DG formulation for the singularly perturbed reaction diffusion problem reads: Find $u \in V$ such that
\begin{gather}\label{eqn:IPDiffusion}
\mathcal{A}_h(u,v) \equiv a_h(u,v) + \frac{1}{\varepsilon} \int_{\mesh_h} \boldsymbol{\Sigma} u \cdot v dx = \int_{\mesh_h} S \cdot v dx \hspace{0.25cm} \forall v \in V_h,
\end{gather}
\changed{where $S$ is a right-hand side or \emph{source} term.}
We observe that given the non-negativeness of $\boldsymbol{\Sigma}$, the coercivity constant for our problem coincides with the Laplacian case while the continuity constant is now dependent on $\varepsilon$. In order to obtain a robust solver we precondition the problem to be able to bound the spectral radius of the preconditioned system independently of $\varepsilon$.

Finally, using a standard basis for the local finite element spaces on each cell and concatenating, we obtain the linear system
\begin{align*}
\boldsymbol{A} {\bf u} = {\bf f},
\end{align*}
where ${\bf u}$ and ${\bf f}$ are the coefficient vector of the representation of $u$ and $f$ respectively in terms of the chosen basis.

\section{Preconditioners}\label{sec:solver}
In this section we provide details on our solver and preconditioner choice, as well as the technical tools needed for the numerical analysis of the preconditioned system.

It is known that the convergence of the preconditioned conjugate gradient method for symmetric real operators depends on the condition number of the preconditioned matrix only.
Thus, if we find a preconditioner such that the this condition number is independent of $h$ and of the parameters of the equation,
the number of iterations required for convergence to a certain error is independent of them as well. We will estimate the condition number of the additive Schwarz method by estimating the smallest and largest eigenvalues $c_{\text{ad}}$ and $C_{\text{ad}}$ as
\begin{align*}
c_{\text{ad}} = \inf_{v \ne 0} \frac{\mathcal{A}_h(\mathcal{P}_\text{ad} v,v)}{\|v\|_{\mathcal{A}_h}^2}, && \text{and} && C_{\text{ad}} = \sup_{v \ne 0} \frac{\mathcal{A}_h(\mathcal{P}_\text{ad} v,v)}{\|v\|_{\mathcal{A}_h}^2}.
\end{align*}
For the rest of the preconditioners, we will estimate the norm of the error propagation operator of a preconditioned Richardson iteration.

\subsection{Schwarz preconditioners} \label{SchwarzPreconditioners}
We choose Schwarz preconditioners for which there is a well-known framework and theory for symmetric positive definite problems (see \cite{ToselliWidlund2005,BrennerScott2002,SmithBjorstadGropp1996,FengKarakashian2001}). The following sections provide the definitions needed to prove convergence estimates in an abstract formulation.

Let $V_j$ for $j=0,1,2,\dots,J$ be Hilbert spaces with norms $\|\cdot\|_{V_j}$, where $V_0$ is used to denote the so-called \emph{coarse space} in a domain decomposition context. For $j=0,1,2,\dots,J$, let
\begin{align*}
\mathcal{R}_j^\intercal:V_j \rightarrow V_h
\end{align*}
denote \emph{prolongation operators} for which there holds
\begin{align*}
\mathcal{R}_j^\intercal V_j \subset V &\text{, and} & V = \sum_{j=0}^J \mathcal{R}_j^\intercal V_j, & & \text{ for } j = 0,1,2,\dots,J.
\end{align*}
Here $\mathcal{R}_j^\intercal V_j$ is the range of the linear operator $\mathcal{R}_j^\intercal$.

Associated with each local space $V_j$ for $j=1,2,\ldots,J$, we introduce local discrete bilinear forms $\mathcal{A}_j(\cdot,\cdot)$, defined on $V_j \times V_j$, as the restriction of global discrete bilinear form $\mathcal{A}_h(\cdot,\cdot)$ on $V_j \times V_j$, with $\| v_j \|_{\mathcal{A}_j}^2=\mathcal{A}_j(v_j,v_j)$.

For the coarse space $V_0$ we use the rediscretization of the problem on the coarse mesh, namely a bilinear form \changed{$\mathcal{A}_0(\cdot,\cdot)$} with a penalty parameter inversely proportional to the diameter of the coarse cells $H$, instead of the inherited coarse space obtained by restriction to $V_0 \times V_0$. \changed{For any fixed $v \in V_0$, we define a \emph{projection-like} operator $\widetilde{\mathcal{P}}_0 v \in V_0$ by
\begin{align*}
\mathcal{A}_0(\widetilde{\mathcal{P}}_0 v,w_0) :=& \mathcal{A}_h(v,\mathcal{R}_0^\intercal w_0), && \forall w_0 \in V_0.
\end{align*}
and the composite operator $\mathcal{P}_0 := \mathcal{R}_0^\intercal \widetilde{\mathcal{P}}_0$.}

The convergence analysis of our method follows the standard framework for subspace correction methods, see for instance~\cite{SmithBjorstadGropp1996,ToselliWidlund2005}, which is based on three main assumptions:

\begin{assumption}[Stable decomposition]\label{ass:energystability}
  The spaces $\left\{V_j \right\}$ are said to provide a stable decomposition if there exists a constant $C_V$ such that each $v \in V_h$ admits a decomposition
\begin{align*}
v = \sum_{j=0}^J \mathcal{R}_j^\intercal v_j,
\end{align*}
with $v_j \in V_j$ such that
\begin{align*}
\sum_{j=0}^J \| v_j \|_{\mathcal{A}_j}^2 \le& C_V \| v \|_{\mathcal{A}_h}^2,
\end{align*}
where $\| v \|_{\mathcal{A}_h}^2=\mathcal{A}_h(v,v)$ and $\| v \|_{\mathcal{A}_j}^2$ accordingly.

If $v \in \text{range}\left(\mathcal{I} - \mathcal{P}_0\right)$, $v \in V_h$ admits a stable decomposition without including the coarse space as follows (see \cite[p.49]{ToselliWidlund2005})
\begin{align*}
\sum_{j=1}^J \| v_j \|_{\mathcal{A}_j}^2 \le& C_V \| v \|_{\mathcal{A}_h}^2.
\end{align*}
\end{assumption}

\begin{assumption}[Strengthened Cauchy-Schwarz inequality]\label{ass:strengthenedcauchyschwarz}
There exist constants $\theta_j \in [0,1]$ for $i,j = 0,1,2,\ldots,J$ such that
\begin{align*}
\mathcal{A}_h(\mathcal{R}_i^\intercal v_i, \mathcal{R}_j^\intercal v_j) \le&\theta_{ij} \mathcal{A}_h(\mathcal{R}_i^\intercal v_i,\mathcal{R}_i^\intercal v_i)^\frac{1}{2} \mathcal{A}_h(\mathcal{R}_j^\intercal v_j,\mathcal{R}_j^\intercal v_j)^\frac{1}{2}, \hspace{1cm} \forall v_i \in V_i ,v_j \in V_j.
\end{align*}
We will denote the spectral radius of $\Theta = \{ \theta_{ij} \}$ by $\rho(\Theta)$.
\end{assumption}

\begin{assumption}[Local stability]\label{ass:localcompatibility}
There exists $\omega \in [1,2)$ such that 
\begin{align*}
\mathcal{A}_h(\mathcal{R}_j^\intercal v_j,\mathcal{R}_j^\intercal v_j) \le \omega \mathcal{A}_j(v_j,v_j) & & \forall v_j \in V_j. 
\end{align*}
\end{assumption}

We now introduce a set of \emph{projection-like} operators $\widetilde{\mathcal{P}}_j : V_h \rightarrow V_j$ for $j=0,1,2,\ldots,J$. These projection-like operators will serve as the building blocks for the construction of Schwarz methods. For any fixed $v \in V_h$, define $\widetilde{\mathcal{P}}_j v \in V_j$ by
\begin{align*}
\mathcal{A}_j(\widetilde{\mathcal{P}}_j v,w_j) :=& \mathcal{A}_h(v,\mathcal{R}_j^\intercal w_j), && \forall w_j \in V_j.
\end{align*}

We note that the well posedness of the global problem ensures $\widetilde{\mathcal{P}}_j$ is well defined for $j=0,1,2,\ldots,J$. To map the elements of $V_j$ into the global discrete space $V_h$, we employ the \emph{prolongation operator} $\mathcal{R}_j^\intercal$ and define the composite operator
\begin{align*}
\mathcal{P}_j := \mathcal{R}_j^\intercal \circ \widetilde{\mathcal{P}}_j, && \text{for } j=0,1,2,\ldots,J.
\end{align*}
Trivially, we have $\mathcal{P}_j:V_h \rightarrow V_h$ for $j=0,1,2,\ldots,J$.

After these preparations, we can write the operator $\mathcal{A}_h$ preconditioned with the additive Schwarz method as
\begin{align*}
\mathcal{P}_{\text{ad}} :=& \mathcal{P}_0 + \mathcal{P}_1 + \mathcal{P}_2 + \dots + \mathcal{P}_J.
\end{align*}

To facilitate the comprehension of the method with respect to its implementation, we write the additive operator in a more explicit form. We use the operator notation for the bilinear forms $\mathcal{A}_h$ and $\mathcal{A}_j$ to obtain the following expression for the local projections
\begin{align*}
\mathcal{A}_j \widetilde{\mathcal{P}}_j v :=& \mathcal{R}_j \mathcal{A}_h v, && \forall v \in V_h.
\end{align*}

Thus,
\begin{align*}
\widetilde{\mathcal{P}}_j =& \mathcal{A}_j^{-1} \mathcal{R}_j^{\intercal} \mathcal{A}_h, && \text{and} && \mathcal{P}_j = \mathcal{R}_j^\intercal \mathcal{A}_j^{-1} \mathcal{R}_j \mathcal{A}_h, 
\end{align*}

Finally, our additive Schwarz preconditioned system reads
\begin{align*}
\mathcal{P}_{\text{ad}} &= \mathcal{R}_0^\intercal \mathcal{A}_0^{-1} \mathcal{R}_0 \mathcal{A}_h + \sum_{j=1}^J \mathcal{R}_j^\intercal \mathcal{A}_j^{-1} \mathcal{R}_j \mathcal{A}_h.
\end{align*}

While the additive version applies all subspace corrections at once and adds them in the end, the multiplicative version applies them successively. It can be defined easily by the \emph{error propagation} operator
\begin{align*}
\mathcal{E}_\text{mu} = \left(\mathcal{I} - \mathcal{P}_N\right) \circ \left(\mathcal{I} - \mathcal{P}_{N-1} \right) \circ \dots \circ \left(\mathcal{I} - \mathcal{P}_0\right),
\end{align*}
where $\mathcal{I}$ denotes the identity operator on $V$. Using $\mathcal{E}_\text{mu} $ we define the multiplicative Schwarz preconditioner as
\begin{align*} 
\mathcal{P}_\text{mu} = \mathcal{I} - \mathcal{E}_\text{mu},
\end{align*}
where $\mathcal{I}$ denotes the identity operator on $V_h$.

Finally, we consider the symmetric, hybrid version, which is additive with respect to the subdomain spaces, but applies the coarse grid correction in a multiplicative way:
\begin{gather} 
\mathcal{P}_\text{hy} = \mathcal{I}  - \left(\mathcal{I} - \sum_{i = 1}^N \mathcal{P}_i\right)(\mathcal{I} - \mathcal{P}_0) \left(\mathcal{I} - \sum_{i = 1}^N \mathcal{P}_i\right).
\end{gather}

Following, we will prove convergence estimates for the operators $\mathcal{P}_{\text{ad}}$, $\mathcal{P}_{\text{hy}}$ and $\mathcal{P}_{\text{mu}}$. For $\mathcal{P}_{\text{ad}}$ we estimate the condition number, for $\mathcal{P}_{\text{mu}}$ we bound the error operator of a preconditioned Richardson iteration and for $\mathcal{P}_{\text{hy}}$ we defer the proof to section \ref{sec:multigrid}, where we study multigrid preconditioners, from which $\mathcal{P}_{\text{hy}}$ is a special case.

We use the general abstract convergence theory of Schwarz methods given in \cite[\S2]{ToselliWidlund2005}. We quote the convergence results below.

\begin{thm}\label{thm:additiveestimate}
Let the assumptions \ref{ass:energystability}, \ref{ass:strengthenedcauchyschwarz} and \ref{ass:localcompatibility} hold, then the following bounds hold for the additive Schwarz preconditioned system
\begin{align*}
c_\text{ad} \ge& \frac{1}{C_V}, && C_\text{ad} \le \omega(\rho(\Theta)+ 1)
\end{align*}
Where $c_\text{ad}$ and $C_\text{ad}$ are the smallest and largest eigenvalues of the preconditioned system, respectively.
\end{thm}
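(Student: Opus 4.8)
The plan is to follow the abstract Schwarz theory of \cite[\S2]{ToselliWidlund2005}, exploiting that $\mathcal{P}_{\text{ad}}$ is selfadjoint and nonnegative with respect to $\mathcal{A}_h$. First I would record two elementary identities that drive everything. From the definition of $\widetilde{\mathcal{P}}_j$ and the symmetry of $\mathcal{A}_h$, choosing the test function $w_j=\widetilde{\mathcal{P}}_j v$ gives
\[
\mathcal{A}_h(\mathcal{P}_j v, v) = \mathcal{A}_h(v,\mathcal{R}_j^\intercal \widetilde{\mathcal{P}}_j v) = \mathcal{A}_j(\widetilde{\mathcal{P}}_j v,\widetilde{\mathcal{P}}_j v) = \|\widetilde{\mathcal{P}}_j v\|_{\mathcal{A}_j}^2 ,
\]
which simultaneously shows that each $\mathcal{P}_j$, and hence $\mathcal{P}_{\text{ad}}$, is $\mathcal{A}_h$-selfadjoint and nonnegative, so the Rayleigh-quotient definitions of $c_{\text{ad}}$ and $C_{\text{ad}}$ are legitimate. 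Combining this identity with Assumption~\ref{ass:localcompatibility} applied to $v_j=\widetilde{\mathcal{P}}_j v$ yields the local-stability consequence $\mathcal{A}_h(\mathcal{P}_j v,\mathcal{P}_j v)\le \omega\,\mathcal{A}_h(\mathcal{P}_j v,v)$, which I will use repeatedly.

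For the lower bound $c_{\text{ad}}\ge 1/C_V$, I would fix an arbitrary $v\in V_h$, take the stable decomposition $v=\sum_j \mathcal{R}_j^\intercal v_j$ furnished by Assumption~\ref{ass:energystability}, and expand
\[
\|v\|_{\mathcal{A}_h}^2 = \sum_j \mathcal{A}_h(v,\mathcal{R}_j^\intercal v_j) = \sum_j \mathcal{A}_j(\widetilde{\mathcal{P}}_j v, v_j).
\]
Applying the Cauchy--Schwarz inequality in each $\mathcal{A}_j$ and then the discrete Cauchy--Schwarz inequality over $j$ bounds the right-hand side by $\bigl(\sum_j\|\widetilde{\mathcal{P}}_j v\|_{\mathcal{A}_j}^2\bigr)^{1/2}\bigl(\sum_j\|v_j\|_{\mathcal{A}_j}^2\bigr)^{1/2}$. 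By the identity of the previous paragraph the first factor equals $\mathcal{A}_h(\mathcal{P}_{\text{ad}}v,v)^{1/2}$, while the second is at most $\sqrt{C_V}\,\|v\|_{\mathcal{A}_h}$ by stability; cancelling one power of $\|v\|_{\mathcal{A}_h}$ and squaring gives $\|v\|_{\mathcal{A}_h}^2\le C_V\,\mathcal{A}_h(\mathcal{P}_{\text{ad}}v,v)$, which is exactly the claimed lower bound.

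For the upper bound I would use the characterization $C_{\text{ad}}=\sup_{v\neq 0}\mathcal{A}_h(\mathcal{P}_{\text{ad}}v,\mathcal{P}_{\text{ad}}v)/\mathcal{A}_h(\mathcal{P}_{\text{ad}}v,v)$, valid for $\mathcal{A}_h$-selfadjoint nonnegative operators, and split off the coarse correction as $\mathcal{P}_{\text{ad}}=\mathcal{P}_0+\sum_{j=1}^J \mathcal{P}_j$. For the fine part I expand $\mathcal{A}_h\bigl(\sum_{i\ge 1}\mathcal{P}_i v,\sum_{j\ge 1}\mathcal{P}_j v\bigr)=\sum_{i,j\ge 1}\mathcal{A}_h(\mathcal{P}_i v,\mathcal{P}_j v)$, insert Assumption~\ref{ass:strengthenedcauchyschwarz}, and read the result off as a quadratic form $x^\intercal\Theta x$ in the vector $x_j=\mathcal{A}_h(\mathcal{P}_j v,\mathcal{P}_j v)^{1/2}$; bounding $x^\intercal\Theta x\le\rho(\Theta)\,|x|^2$ and invoking the local-stability consequence converts this into $\omega\,\rho(\Theta)\,\mathcal{A}_h\bigl(\sum_{j\ge 1}\mathcal{P}_j v,v\bigr)$, so the fine part has largest eigenvalue at most $\omega\,\rho(\Theta)$. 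The coarse term is handled by the local-stability consequence alone, giving $\lambda_{\max}(\mathcal{P}_0)\le\omega$; Weyl subadditivity of the largest eigenvalue of $\mathcal{A}_h$-selfadjoint operators then assembles the two pieces into $C_{\text{ad}}\le\omega(\rho(\Theta)+1)$.

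I expect the only genuine subtleties to be bookkeeping rather than deep: verifying $\mathcal{A}_h$-selfadjointness so that both spectral characterizations apply, isolating the coarse space so that its contribution surfaces as the additive ``$+1$'' rather than inflating $\rho(\Theta)$ (noting that the spectral radius of the fine principal submatrix is dominated by that of the full nonnegative symmetric $\Theta$), and justifying $x^\intercal\Theta x\le\rho(\Theta)\,|x|^2$. Since the statement merely quotes \cite[\S2]{ToselliWidlund2005}, I would present the argument at the level of these key identities and defer the remaining routine estimates to the reference.
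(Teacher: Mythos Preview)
Your proposal is correct and follows precisely the abstract Schwarz argument of \cite[\S2]{ToselliWidlund2005} that the paper itself simply cites without reproducing. The paper's own proof consists of a one-line reference to that source, so your sketch of the Rayleigh-quotient identities, the Cauchy--Schwarz lower-bound argument, and the strengthened Cauchy--Schwarz plus local-stability upper-bound argument is exactly the content the paper defers to the reference; there is nothing to add or correct.
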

\begin{proof}
See \cite[\S2.3]{ToselliWidlund2005}.
\end{proof}

\begin{thm}\label{thm:hybridestimate}
Let the assumptions \ref{ass:energystability}, \ref{ass:strengthenedcauchyschwarz} and \ref{ass:localcompatibility} hold, then the following bounds hold for the hybrid Schwarz preconditioned system
\begin{align*}
\left|\mathcal{A}_h\left(\left[\mathcal{I}-\mathcal{P}_\text{hy} \right]v,v \right)\right| \le c_\text{MG} \mathcal{A}_h(v,v), && \forall v \in V_h,
\end{align*}
where $c_\text{MG}$ is a constant independent of $h$ and $\varepsilon$.
\end{thm}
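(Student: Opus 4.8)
The plan is to read $\mathcal{I}-\mathcal{P}_\text{hy}$ as the error propagation operator of a symmetric two-level iteration and to bound its energy norm. Writing $\mathcal{S}:=\sum_{i=1}^N \mathcal{P}_i$ for the additive subdomain smoother, the definition of $\mathcal{P}_\text{hy}$ gives $\mathcal{I}-\mathcal{P}_\text{hy}=(\mathcal{I}-\mathcal{S})(\mathcal{I}-\mathcal{P}_0)(\mathcal{I}-\mathcal{S})$. Because the local forms $\mathcal{A}_i$ are exact restrictions of $\mathcal{A}_h$, each $\mathcal{P}_i$ with $i\ge 1$ is the $\mathcal{A}_h$-orthogonal projection onto $\mathcal{R}_i^\intercal V_i$, so $\mathcal{S}$, and hence $\mathcal{I}-\mathcal{S}$, is $\mathcal{A}_h$-self-adjoint; one also checks directly that $\mathcal{P}_0=\mathcal{R}_0^\intercal\widetilde{\mathcal{P}}_0$ is $\mathcal{A}_h$-self-adjoint and positive semidefinite, since $\mathcal{A}_h(\mathcal{P}_0 w,w)=\|\widetilde{\mathcal{P}}_0 w\|_{\mathcal{A}_0}^2\ge 0$. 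Consequently $\mathcal{I}-\mathcal{P}_\text{hy}$ is a self-adjoint congruence of $\mathcal{I}-\mathcal{P}_0$, and the claim reduces to exhibiting a single constant $c_\text{MG}<1$ with $\mathcal{A}_h((\mathcal{I}-\mathcal{P}_\text{hy})v,v)\le c_\text{MG}\,\mathcal{A}_h(v,v)$. First I would substitute $w:=(\mathcal{I}-\mathcal{S})v$ and use self-adjointness of $\mathcal{I}-\mathcal{S}$ to obtain the clean identity $\mathcal{A}_h((\mathcal{I}-\mathcal{P}_\text{hy})v,v)=\mathcal{A}_h((\mathcal{I}-\mathcal{P}_0)w,w)$, which isolates the coarse correction acting on the post-smoothing residual.

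The core estimate then follows the standard subspace-correction machinery of \cite{ToselliWidlund2005}, fed by the three hypotheses already set up here. The stable decomposition of Lemma \ref{lem:laplacianstabledecomposition} supplies, for each $v$, a splitting with $\sum_{j}\|v_j\|_{\mathcal{A}_j}^2\le C_V\|v\|_{\mathcal{A}_h}^2$; combined with the strengthened Cauchy–Schwarz bound of Assumption \ref{ass:strengthenedcauchyschwarz} (controlling $\rho(\Theta)$ and hence $\|\mathcal{S}\|_{\mathcal{A}_h}$) and local stability with $\omega\in[1,2)$ of Assumption \ref{ass:localcompatibility}, this quantifies how much energy the coarse correction must remove and yields a contraction factor $c_\text{MG}=1-\psi(C_V,\omega,\rho(\Theta))<1$ for an explicit positive $\psi$. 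Since $\mathcal{P}_\text{hy}$ is exactly the two-level instance of the V-cycle (the $J$ cell-based subdomains forming a single fine level, together with one coarse level), the most economical route is to set up this identification precisely and then invoke the general V-cycle estimate of Theorem \ref{thm:multigrid}, rather than to rerun the recursion; the symmetry of the hybrid form is what lets its analysis specialize cleanly from the multigrid case.

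The hard part is not the algebra but the robustness of $c_\text{MG}$ in $\varepsilon$. The reaction contribution $\tfrac1\varepsilon(\boldsymbol{\Sigma}u,v)_{\mathcal{H}}$ blows up on the complement of the inertial subspace $\ker\boldsymbol{\Sigma}$ while vanishing on $\ker\boldsymbol{\Sigma}$ itself, so the decomposition constant $C_V$ must be shown uniform across this scale separation; that uniformity is precisely what Lemma \ref{lem:laplacianstabledecomposition} is asked to deliver, and proving it will be the true obstacle. The structural fact I would exploit is that in the IP-DG formulation the reaction term couples only components within a single cell, so each local solver $\mathcal{A}_i^{-1}$ inverts the full stiff reaction locally and the associated projections $\mathcal{P}_i$ damp the reaction-dominated part of the error independently of $\varepsilon$, while the coarse space $V_0$ resolves the remaining globally smooth inertial component. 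A secondary technical point requiring care is that $\mathcal{P}_0$ is built from a \emph{rediscretized} coarse form $\mathcal{A}_0$ (penalty scaled by $H$) rather than the inherited restriction, so it is not an exact $\mathcal{A}_h$-orthogonal projection; contractivity of $\mathcal{I}-\mathcal{P}_0$ must be established through local stability on the coarse level, which is in fact the reason the coarse penalty is chosen this way.
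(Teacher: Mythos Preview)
Your proposal is correct and lands on exactly the route the paper takes: the paper's entire proof of this theorem is a one-line deferral to Section~\ref{sec:multigrid}, observing that $\mathcal{P}_\text{hy}$ is the two-level instance of the V-cycle and that Theorem~\ref{thm:multigrid} therefore supplies the bound with $c_\text{MG}=\frac{C_1C_R}{m+C_1C_R}$. Your additional self-adjointness computations and the sketch of a direct Toselli--Widlund argument are sound but not used in the paper; note only that your reduction to a one-sided bound $\mathcal{A}_h((\mathcal{I}-\mathcal{P}_\text{hy})v,v)\le c_\text{MG}\mathcal{A}_h(v,v)$ tacitly assumes $\mathcal{I}-\mathcal{P}_0\ge 0$, which need not hold for the noninherited coarse form---this is precisely why the statement carries an absolute value and why the paper routes through the Duan et~al.\ framework rather than the symmetric-projection theory.
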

\begin{proof}
We defer this proof to section \ref{sec:multigrid}, as it is a special case of a multigrid preconditioner and as such its convergence estimate is given in theorem \ref{thm:multigrid}.
\end{proof}

The multiplicative operator is not symmetric and we will consider a simple Richardson iteration applied to the corresponding preconditioned system and provide an upper bound for the norm of the error propagation operator.
\begin{thm}\label{thm:multiplicativeestimate}
Let the assumptions in definitions \ref{ass:energystability}, \ref{ass:strengthenedcauchyschwarz} and \ref{ass:localcompatibility} hold, then the following bounds hold for the multiplicative Schwarz preconditioned system
\begin{align*}
\left\| \mathcal{E}_\text{mu} \right\| \le 1 - \frac{2 - \omega}{\left(2 \max\{1,\omega^2\} \rho^2(\Theta) + 1\right)C_V} \le 1
\end{align*}
\end{thm}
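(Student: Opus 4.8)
The plan is to apply the standard abstract convergence theory for multiplicative (successive) subspace corrections, adapting it to the non-symmetric operator $\mathcal{E}_\text{mu}$ through an energy estimate in the $\mathcal{A}_h$-norm; all three of Assumptions \ref{ass:energystability}, \ref{ass:strengthenedcauchyschwarz} and \ref{ass:localcompatibility} enter, and the three constants $C_V$, $\rho(\Theta)$ and $\omega$ surface in the final bound exactly as the abstract theory of \cite[\S2.3]{ToselliWidlund2005} predicts. First I would record the spectral properties of the projection-like operators $\mathcal{P}_j$. Each $\mathcal{P}_j$ is self-adjoint and positive semidefinite with respect to $\mathcal{A}_h(\cdot,\cdot)$, which follows directly from the defining relation $\mathcal{A}_j(\widetilde{\mathcal{P}}_j v,w_j)=\mathcal{A}_h(v,\mathcal{R}_j^\intercal w_j)$ and the symmetry of the forms. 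Combining this with the local stability Assumption \ref{ass:localcompatibility} gives the crucial inequality $\mathcal{A}_h(\mathcal{P}_j w,\mathcal{P}_j w)\le \omega\,\mathcal{A}_h(\mathcal{P}_j w,w)$, i.e. the spectrum of each $\mathcal{P}_j$ lies in $[0,\omega]$.

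Next I set up the telescoping energy identity. Writing the partial products $E_{-1}=\mathcal{I}$ and $E_j=(\mathcal{I}-\mathcal{P}_j)E_{j-1}$, so that $\mathcal{E}_\text{mu}=E_J$, the expansion $\|(\mathcal{I}-\mathcal{P}_j)w\|_{\mathcal{A}_h}^2=\|w\|_{\mathcal{A}_h}^2-2\mathcal{A}_h(\mathcal{P}_j w,w)+\mathcal{A}_h(\mathcal{P}_j w,\mathcal{P}_j w)$ together with the spectral bound above yields the per-step decrease
\[
\|E_{j-1}v\|_{\mathcal{A}_h}^2-\|E_j v\|_{\mathcal{A}_h}^2 \ge (2-\omega)\,\mathcal{A}_h(\mathcal{P}_j E_{j-1}v,E_{j-1}v).
\]
Summing over $j$ telescopes to
\[
\|v\|_{\mathcal{A}_h}^2-\|\mathcal{E}_\text{mu}v\|_{\mathcal{A}_h}^2 \ge (2-\omega)\sum_{j=0}^J \mathcal{A}_h(\mathcal{P}_j E_{j-1}v,E_{j-1}v).
\]
It therefore remains only to bound the accumulated correction on the right from below by a fixed multiple of $\|v\|_{\mathcal{A}_h}^2$.

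The main obstacle is exactly this lower bound. Here I would invoke the stable decomposition of Assumption \ref{ass:energystability} to write $v=\sum_j \mathcal{R}_j^\intercal v_j$ with $\sum_j\|v_j\|_{\mathcal{A}_j}^2\le C_V\|v\|_{\mathcal{A}_h}^2$, expand $\|v\|_{\mathcal{A}_h}^2=\sum_j \mathcal{A}_h(v,\mathcal{R}_j^\intercal v_j)$, and then replace $v$ by the partial product $E_{j-1}v$ inside each term, controlling the mismatch $\mathcal{A}_h(v-E_{j-1}v,\mathcal{R}_j^\intercal v_j)$ through the strengthened Cauchy--Schwarz inequality of Assumption \ref{ass:strengthenedcauchyschwarz}; this is where $\rho(\Theta)$ and the factor $\max\{1,\omega^2\}$ appear. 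The delicate bookkeeping is that the difference decomposes as $v-E_{j-1}v=\sum_{k<j}\mathcal{P}_k E_{k-1}v$, and propagating this across subspaces then collapsing the resulting double sum by means of $\rho(\Theta)$ is the part that requires care.

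Combining the resulting estimate $\sum_{j=0}^J \mathcal{A}_h(\mathcal{P}_j E_{j-1}v,E_{j-1}v)\ge \frac{1}{(2\max\{1,\omega^2\}\rho^2(\Theta)+1)C_V}\,\|v\|_{\mathcal{A}_h}^2$ with the telescoping inequality bounds $\|\mathcal{E}_\text{mu}\|_{\mathcal{A}_h}^2$ from above by the stated quantity, and the final $\le 1$ is immediate since $2-\omega>0$ for $\omega\in[1,2)$ while the denominator is positive. As with the additive and hybrid cases, the combinatorial heart of the lower-bound step is precisely the abstract multiplicative result of \cite[\S2.3]{ToselliWidlund2005} specialized to our forms, so I would present the spectral bound on $\mathcal{P}_j$, the energy identity, and the two estimates above, and defer the double-sum manipulation to that reference.
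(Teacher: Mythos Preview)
Your proposal is correct and follows exactly the approach the paper takes: the paper's own proof consists entirely of the citation ``See \cite[\S2.3]{ToselliWidlund2005},'' and what you have written is a faithful sketch of the abstract multiplicative argument found there, with the same telescoping identity, the spectral bound $\mathcal{P}_j\le\omega$ from local stability, and the lower bound via the stable decomposition and strengthened Cauchy--Schwarz. You provide more detail than the paper does, but the underlying proof is identical.
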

\begin{proof}
See \cite[\S2.3]{ToselliWidlund2005}.
\end{proof}

\subsection{Application to the discrete problem}

In this section we define the Schwarz method for  the discrete problem in equation~\eqref{eqn:IPDiffusion} and verify that assumptions \ref{ass:energystability}, \ref{ass:strengthenedcauchyschwarz} and \ref{ass:localcompatibility} apply.

After enumerating the cells $\cell_j\in\mesh_h$ for $j=1,\ldots,J$, we choose the local spaces $V_j = V(\cell_j) = \mathbb{Q}_p^G(\kappa_j)$, together with the coarse space $V_0$, defined on $\mesh_H$.
We remark that we are using a nonoverlapping subdivision order to define the direct decomposition $V_h = \oplus_{j=1}^J \mathcal{R}_j^\intercal V_j$, where $\mathcal{R}_j^\intercal\colon V_j \to V_h$ is the simple injection. Similarly, for $v\in V_h$, $\mathcal{R}_jv(x) = v(x)$ if $x\in \cell_j$ and zero otherwise.

Following, we list three standard results from \cite{FengKarakashian2001} that we need for our proof.

For any $v \in V_D = \prod_{K \in \mesh_H} \mathcal{V}(K)$, there holds the trace inequality (see \cite[Lemma 3.1]{FengKarakashian2001})
\begin{align}\label{eqn:TraceInequality}
\|v\|_{\mathcal{H}(\partial D)}^2 \le c \left[\frac{1}{H} \| v \|_{\mathcal{H}(D)}^2 + H \| v \|_{\mathcal{V}(D)}^2 \right].
\end{align}

Suppose $D$ is a convex domain. For any $v \in V_D$, let $\overline{u} = \frac{1}{\text{meas}(D)} \int_D v dx$ be the average value of $v$ over $D$. Then we can write a Poincar\'e inequality as follows (see \cite[Lemma 3.2]{FengKarakashian2001})
\begin{align*}
\| v - \overline{v} \|_{\mathcal{H}(D)} \le c \text{ diam}(D) \|u\|_{\mathcal{V}(D)} \text{ on } D.
\end{align*}
In particular, if $D \in \mesh_H$
\begin{align}\label{eqn:InterpolationEstimate}
\| v - \overline{v} \|_{\mathcal{H}(D)} \le c H \|u\|_{\mathcal{V}(D)} \text{ on } D.
\end{align}

Let $v,w \in V_h$, let $v_j,w_j \in V_j$, $j=1,\dots,J$, be given (uniquely) by $v=\sum_{j=1}^J v_j$, $w = \sum_{j=1}^J w_j$. Then the following identity holds (see \cite[Lemma 3.3]{FengKarakashian2001})
\begin{align}\label{eqn:BlockDiagInt}
a_h(v,w) = \sum_{j=1}^J a_j(v_j,w_j) + I(v,w),
\end{align}
where $I(\cdot,\cdot):V_h \times V_h \rightarrow \mathbb{R}$ comprises all terms located outside the block diagonal of the bilinear from $a_h(v,w)$, connecting different subdomains.

We then obtain the following interface estimate for cell-wise subdomains
\begin{lem} \label{lem:InterfaceEstimate}
There exists a constant $c$ such that
\begin{align*}
|I(v,v)| \le c \left[\frac{1}{h^2} \sum_{K \in \mesh_h} \|v\|_{\mathcal{H}(K)}^2 + a_h(v,v) \right].
\end{align*}
\end{lem}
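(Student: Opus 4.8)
The starting point is the splitting \eqref{eqn:BlockDiagInt}: since each local space $V_j$ lives on a single cell $\cell_j$, the local forms $a_j$ collect exactly those volume and face contributions of $a_h$ that involve traces from one cell only, so $I(v,v)$ is precisely the sum over interior faces $F \in \mathbb{F}_h^I$ of the cross terms coupling the two adjacent cells $\cell_+$ and $\cell_-$. The plan is to expand the averaging operators $\{\!\!\{ v\boldsymbol{n}\}\!\!\} = (v_+\boldsymbol{n}_+ + v_-\boldsymbol{n}_-)/\sqrt2$ and $\{\!\!\{\boldsymbol{D}\nabla v\}\!\!\}$ in both the penalty and the consistency integrals of \eqref{eqn:DGIPoriginal}, and to isolate the genuinely off-diagonal products, schematically $\tfrac1h v_+ v_-$ from the penalty term and $v_+\,\partial_n v_-$ (together with its symmetric partner) from the consistency terms. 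No reaction contribution enters, since \eqref{eqn:BlockDiagInt} concerns only the diffusion form $a_h$; this is precisely why the target bound is $\varepsilon$-independent.

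Next I would bound each face contribution by Cauchy--Schwarz on $F$, reducing matters to the face norms $\|v_\pm\|_{\mathcal{H}(F)}$ and $\|\nabla v_\pm\|_{\mathcal{H}(F)}$. For the function-value factors I invoke the trace inequality \eqref{eqn:TraceInequality} on the adjacent fine cell (with the cell diameter $h$ in place of $H$), giving $\|v_\pm\|^2_{\mathcal{H}(F)} \le c[h^{-1}\|v\|^2_{\mathcal{H}(\cell_\pm)} + h\|v\|^2_{\mathcal{V}(\cell_\pm)}]$. For the gradient factors in the consistency terms the $H^1$ trace inequality is not sharp enough, so I use a discrete inverse trace inequality for polynomials, $\|\nabla v_\pm\|^2_{\mathcal{H}(F)} \le c\,h^{-1}\|\nabla v\|^2_{\mathcal{H}(\cell_\pm)} \le c\,h^{-1}\|v\|^2_{\mathcal{V}(\cell_\pm)}$, valid on mapped quadrilaterals and hexahedra by shape regularity. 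The penalty cross term then yields $\tfrac{c}{h}\|v_+\|_{\mathcal{H}(F)}\|v_-\|_{\mathcal{H}(F)} \le \tfrac{c}{h^2}\|v\|^2_{\mathcal{H}} + c\|v\|^2_{\mathcal{V}}$ directly. For the consistency cross term I apply a weighted Young inequality with weight $\lambda = 1/h$, so that $\|v_+\|_{\mathcal{H}(F)}\|\nabla v_-\|_{\mathcal{H}(F)} \le \tfrac{1}{2h}\|v_+\|^2_{\mathcal{H}(F)} + \tfrac{h}{2}\|\nabla v_-\|^2_{\mathcal{H}(F)}$; the first term contributes $h^{-2}\|v\|^2_{\mathcal{H}} + \|v\|^2_{\mathcal{V}}$ and the second contributes $\|v\|^2_{\mathcal{V}}$, matching the desired form.

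Summing over all interior faces and using shape regularity and local quasi-uniformity, so that each cell is shared by a uniformly bounded number of faces, produces
\[
|I(v,v)| \le c\left[\frac{1}{h^2}\sum_{K\in\mesh_h}\|v\|^2_{\mathcal{H}(K)} + \sum_{K\in\mesh_h}\|v\|^2_{\mathcal{V}(K)}\right].
\]
Finally I would replace the second sum using the coercivity of $a_h$ (valid for $\ip$ sufficiently large, as assumed after \eqref{eqn:DGIPoriginal}): the energy estimate controls $\sum_{K}\|\nabla v\|^2_{\mathcal{H}(K)}$, and since the $\eta_\gi$ are bounded above this gives $\sum_{K}\|v\|^2_{\mathcal{V}(K)} \le c\,a_h(v,v)$, which completes the stated bound.

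I expect the main obstacle to be the consistency (gradient) terms: the $H^1$ trace inequality supplied in \eqref{eqn:TraceInequality} does not by itself deliver the $h^{-1}$ scaling on $\|\nabla v\|^2_{\mathcal{H}(F)}$, so one must bring in a discrete inverse estimate on the mapped cells and then balance the two factors through the correctly weighted Young inequality to land exactly on $h^{-2}\|v\|^2_{\mathcal{H}}$ plus an $O(1)$ multiple of $\|v\|^2_{\mathcal{V}}$. Any other choice of weight either loses the sharp $h^{-2}$ power or leaves an uncontrolled positive power of $h$ on the energy term.
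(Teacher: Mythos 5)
Your argument is correct and follows essentially the same route as the paper: the paper simply quotes the intermediate bound $|I(v,v)| \le c\bigl(a_h(v,v) + \tfrac1h\sum_F\|v\|_{\mathcal{H}(F)}^2\bigr)$ from \cite[Eq.\ (4.20)]{FengKarakashian2001} --- whose derivation is exactly your face-by-face expansion with the inverse trace estimate for the gradient factors and the $1/h$-weighted Young inequality --- and then, as you do, applies the cell trace inequality and the coercivity bound $\sum_K\|\nabla v\|_{\mathcal{H}(K)}^2 \le c\,a_h(v,v)$. No substantive difference.
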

\begin{proof}
We extend the result in \cite[Lemma 4.3]{FengKarakashian2001}. The following estimate, from \cite[Eq. (4.20)]{FengKarakashian2001}, holds when using cell wise subdomains
\begin{align*}
|I(v,v)| \le c \left(a_h(v,v) + \frac{1}{h} \sum_{F \in \left(\mathbb{F}_h^I \cup \mathbb{F}_h^B\right)} \|v\|_{\mathcal{H}(F)}^2 \right),
\end{align*}
where $\| \cdot \|_{F}$ is the $L^2$-inner product on the faces of cell $K$ of the fine mesh.

Using the trace inequality $\|v\|_{\mathcal{H}(F)}^2 = c \left[\frac{1}{h} \|v\|_{\mathcal{H}(K)}^2 + h \|\nabla v\|_{\mathcal{H}(K)}^2 \right]$ from \cite[Eq. (3.9)]{FengKarakashian2001}, we obtain
\begin{align*}
|I(v,v)| \le c \left(a_h(v,v) + \frac{1}{h} \sum_{K \in \mesh_h} \left[ \frac{1}{h} \|v\|_{\mathcal{H}(K)}^2 + h \|\nabla v\|_{\mathcal{H}(K)}^2 \right] \right).
\end{align*}
The result follows from observing that $\sum_{K \in \mesh_h} \|\nabla v\|_{\mathcal{H}(K)}^2 \le c~a_h(v,v)$.
\end{proof}

Finally, we concentrate on a stable decomposition. The convergence theory from \cite{FengKarakashian2001} requires that the subdomains used for the Schwarz method are at least the same size as the cells in the coarse mesh. Recently, an extension has been published in \cite{Dryja2016} to include the case of cell-wise subdomains, however, the proof uses $P_1$ nonconforming interpolant and enriching operators for simplices \cite{Brenner2003}.

We achieve a stable decomposition, by a close examination of the proof in \cite{FengKarakashian2001}, which holds for simplices, quadrilaterals, and hexahedra. In particular, it does not require auxiliary spaces with continuity assumptions like Crouzeix-Raviart.

\begin{lem}\label{lem:laplacianstabledecomposition}
Every $v \in V_h$ admits a decomposition of the form $v = \sum_{j=0}^J \mathcal{R}_j^\intercal V_j$, $v_j \in V_j$, $j = 0,\dots,J$ which satisfies the bound
\begin{gather}
\label{eq:stable1}
\sum_{i=0}^J a_j\left(v_j,v_j\right) \le C_{V,\Delta} a\left(v,v\right),
\end{gather}
with $C_{V,\Delta} = \mathcal{O}\left(\frac{H^2}{h^2}\right)$, where $h$ and $H$ denote the cell diameters used in the fine and coarse meshes respectively.
\end{lem}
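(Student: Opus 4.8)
The plan is to build the splitting from a coarse cellwise–average interpolant together with the fine cellwise remainder, arranging matters so that the factor $H^2/h^2$ comes only from the fine level while the coarse term stays of order one. First I would define $v_0\in V_0$ by $v_0|_K=\overline v_K:=\frac{1}{\operatorname{meas}(K)}\int_K v\,dx$ on each coarse cell $K\in\mesh_H$, which is admissible since constants lie in $\mathbb Q_p$. Setting $w:=v-\mathcal R_0^\intercal v_0$ and $v_j:=\mathcal R_j w$ (the restriction of $w$ to the fine cell $\cell_j$) for $j=1,\dots,J$, the directness of the fine decomposition yields $v=\mathcal R_0^\intercal v_0+\sum_{j=1}^J\mathcal R_j^\intercal v_j$, so it remains to estimate $a_0(v_0,v_0)+\sum_{j=1}^J a_j(v_j,v_j)$.

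For the fine–level sum I would apply the block–diagonal identity \eqref{eqn:BlockDiagInt} to $w$, obtaining $\sum_{j=1}^J a_j(v_j,v_j)=a_h(w,w)-I(w,w)$, and then Lemma \ref{lem:InterfaceEstimate}, which gives
\begin{align*}
\sum_{j=1}^J a_j(v_j,v_j)\le c\Big(a_h(w,w)+\frac{1}{h^2}\sum_{\cell\in\mesh_h}\|w\|_{\mathcal H(\cell)}^2\Big).
\end{align*}
Since $w|_K=v-\overline v_K$, the coarse Poincar\'e estimate \eqref{eqn:InterpolationEstimate} yields $\sum_{\cell}\|w\|_{\mathcal H(\cell)}^2=\sum_{K\in\mesh_H}\|v-\overline v_K\|_{\mathcal H(K)}^2\le cH^2\|v\|_{\mathcal V}^2$, so by coercivity the second term is $\mathcal O(H^2/h^2)\,a_h(v,v)$. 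This is exactly where the factor enters: the $1/h^2$ of the interface estimate meets the $H^2$ of the coarse Poincar\'e inequality.

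The main obstacle is to control the penalty contributions of the piecewise–constant $v_0$, which appear in $a_h(w,w)$ at weight $1/h$ and in $a_0(v_0,v_0)$ at weight $1/H$. As $\nabla v_0=0$, only jump terms survive; on interior fine faces $v_0$ is continuous, so the jump of $w$ equals that of $v$, and the only new terms sit on coarse faces, where $v_0$ jumps by $\overline v_{K_1}-\overline v_{K_2}$. On such a face $F$ shared by $K_1,K_2$ I would write, with $v_{\pm}$ the traces of $v$,
\begin{align*}
\overline v_{K_1}-\overline v_{K_2}=(\overline v_{K_1}-v_{+})+(v_{+}-v_{-})+(v_{-}-\overline v_{K_2}),
\end{align*}
where $v_{+}-v_{-}$ is the fine jump already penalized in $a_h(v,v)$, and each deviation obeys $\|v-\overline v_{K_i}\|_{\mathcal H(F)}^2\le cH\|v\|_{\mathcal V(K_i)}^2$ upon combining the trace inequality \eqref{eqn:TraceInequality} with \eqref{eqn:InterpolationEstimate} on $K_i$. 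Summing over coarse faces gives $\sum_F\frac1H\|\overline v_{K_1}-\overline v_{K_2}\|_{\mathcal H(F)}^2\le c\,a_h(v,v)$, hence $a_0(v_0,v_0)\le c\,a_h(v,v)$ is of order one, while the same bound at weight $1/h$ yields $a_h(w,w)\le c\frac{H}{h}\,a_h(v,v)$, which is dominated by the $H^2/h^2$ term. Collecting everything gives $\sum_{j=0}^J a_j(v_j,v_j)\le C_{V,\Delta}\,a(v,v)$ with $C_{V,\Delta}=\mathcal O(H^2/h^2)$. The delicate point is precisely the mismatch between the $1/H$ penalty of $a_0$ and the $1/h$ penalty of $a_h$ on coarse faces: it is what keeps the coarse term bounded independently of $H/h$, and it is the reason the averaging interpolant must be measured in the rediscretized coarse form rather than in the restriction of $a_h$.
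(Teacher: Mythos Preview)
Your proposal is correct and follows essentially the same route as the paper: the same piecewise-constant coarse interpolant $v_0$, the same use of the block-diagonal identity \eqref{eqn:BlockDiagInt} with Lemma~\ref{lem:InterfaceEstimate} to produce the $H^2/h^2$ factor via \eqref{eqn:InterpolationEstimate}, and the same trace-plus-Poincar\'e argument on coarse faces to bound the jumps of $v_0$, yielding $a_0(v_0,v_0)\le c\,a_h(v,v)$ and $a_h(\mathcal R_0^\intercal v_0,\mathcal R_0^\intercal v_0)\le c\tfrac{H}{h}a_h(v,v)$. The only cosmetic difference is that the paper controls $a_h(w,w)$ via Minkowski's inequality, reducing to $a_h(\mathcal R_0^\intercal v_0,\mathcal R_0^\intercal v_0)$, whereas you analyze the jump structure of $w$ directly; both lead to the same estimate.
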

\begin{proof}
Let $v_0 \in V_0$ be the piecewise constant average of $v$ on the coarse mesh $\mesh_H$, let $w = v - v_0$. We decompose $w$ in nonoverlapping cell-wise subdomains as follows
\begin{align*}
w = \sum_{j=1}^J v_j,
\end{align*}
where $v_1,\dots,v_J$ are uniquely determined. From equation \eqref{eqn:BlockDiagInt} we have
\begin{align*}
a_h(w,w) =& \sum_{j=1}^J a_j(v_j,v_j) + I(w,w), \\
\intertext{or equivalently,}
a_h(v-\mathcal{R}_0^\intercal v_0,v-\mathcal{R}_0^\intercal v_0) =& \sum_{j=1}^J a_j(v_j,v_j) + I(v-\mathcal{R}_0^\intercal v_0,v-\mathcal{R}_0^\intercal v_0),
\end{align*}
Reordering and estimating the interface term by its absolute value we obtain
\begin{gather}
\label{eq:stable2}
\sum_{j=1}^J a_j(v_j,v_j) \le a_h(v-\mathcal{R}_0^\intercal v_0,v-\mathcal{R}_0^\intercal v_0) + \left|I(v-\mathcal{R}_0^\intercal v_0,v-\mathcal{R}_0^\intercal v_0)\right|,
\end{gather}
using lemma \ref{lem:InterfaceEstimate} we have
\begin{align}
\begin{aligned}\label{eq:stable3}
\sum_{j=1}^J a_j(v_j,v_j) \le& c \left( a\left(v-\mathcal{R}_0^\intercal v_0,v-\mathcal{R}_0^\intercal v_0\right) + \frac{1}{h^2} \sum_{K \in \mesh_h} \|v - \mathcal{R}_0^\intercal v_0\|_{\mathcal{H}(K)}^2  \right)\\
\le& c \left( \left(a_h(v,v)^{1/2} + a\left(\mathcal{R}_0^\intercal v_0,\mathcal{R}_0^\intercal v_0\right)^{1/2} \right)^2 \right.\\
&\left.+ \frac{1}{h^2} \sum_{D \in \mesh_H} \|v - \mathcal{R}_0^\intercal v_0\|_{\mathcal{H}(D)}^2  \right),
\end{aligned}
\end{align}
where we used Minkowsky's inequality and we regrouped the $L^2$ inner products.

We expand the first term and use equation \eqref{eqn:InterpolationEstimate} to obtain
\begin{align*}
\sum_{j=1}^J a_j(v_j,v_j) \le &c \left( a_h(v,v) + 2 a_h(v,v)^{1/2} a_h(\mathcal{R}_0^\intercal v_0,\mathcal{R}_0^\intercal v_0)^{1/2} \right.\\
&\left. + a_h(\mathcal{R}_0^\intercal v_0,\mathcal{R}_0^\intercal v_0) + \frac{H^2}{h^2} \|v\|_\mathcal{V}^2  \right) \\
\le& c \left(2 a_h(v,v) + 2 a_h(\mathcal{R}_0^\intercal v_0,\mathcal{R}_0^\intercal v_0) + \frac{H^2}{h^2} a_h(v,v) \right),
\end{align*}
where we used Young's inequality and coercivity of $a_h(\cdot,\cdot)$.

Finally, including the coarse space we achieve
\begin{align*}
\sum_{j=0}^J a_j(v_j,v_j) \le& c \left(a_0(v_0,v_0) + a_h(\mathcal{R}_0^\intercal v_0,\mathcal{R}_0^\intercal v_0) + \frac{H^2}{h^2} a_h(v,v)\right).
\end{align*}

It remains to bound $a_h(\mathcal{R}_0^\intercal v_0,\mathcal{R}_0^\intercal v_0)$ in a way such that the estimate is independent of the usage of cell-wise or larger subdomains and a constant $\mathcal{O}(\frac{H}{h})$ is achieved as we show below. Since $v_0$ is piecewise constant on $\mesh_H$, and hence also on $\mesh_h$,
\begin{align} \label{eqn:globalcoarse}
\begin{aligned}
a_h(\mathcal{R}_0^\intercal v_0,\mathcal{R}_0^\intercal v_0) =& \ip \sum_{F \in \mathbb{F}_h^I} \frac{1}{h} \|\mathcal{R}_0^\intercal v_0^+ - \mathcal{R}_0^\intercal v_0^-\|_{\mathcal{H}(F)}^2 \\
&+ \ip \sum_{F \in \mathbb{F}_h^B} \frac{1}{h} \|\mathcal{R}_0^\intercal v_0^+\|_{\mathcal{H}(F)}^2,
\end{aligned}
\end{align}
where we observe
\begin{align}\label{eqn:localcoarse}
a_0(v_0,v_0) = \frac{h}{H} a_h(\mathcal{R}_0^\intercal v_0,\mathcal{R}_0^\intercal v_0).
\end{align}

Adding and subtracting $v$ in equation \eqref{eqn:globalcoarse} gives
\begin{multline*}
a_h(\mathcal{R}_0^\intercal v_0,\mathcal{R}_0^\intercal v_0) \le c \ip \left( \sum_{F \in \mathbb{F}_h^I} \frac{1}{h} \|(v - \mathcal{R}_0^\intercal v_0)^+ - (v - \mathcal{R}_0^\intercal v_0)^-\|_{\mathcal{H}(F)}^2 \right. \\
+ \sum_{F \in \mathbb{F}_h^B} \frac{1}{h} \|(v - \mathcal{R}_0^\intercal v_0)^+\|_{\mathcal{H}(F)}^2 \\
+ \left. \sum_{F \in \mathbb{F}_h^I} \frac{1}{h} \|v^+ - v^-\|_{\mathcal{H}(F)}^2 + \sum_{F \in \mathbb{F}_h^B} \frac{1}{h} \|v^+\|_{\mathcal{H}(F)}^2 \right).
\end{multline*}
The last two terms are obviously bounded by $a_h(v,v)$. Also, since $u_0$ is piecewise constant on each $D \in \mesh_H$, $\|(v - \mathcal{R}_0^\intercal v_0)^+ - (v - \mathcal{R}_0^\intercal v_0)^-\|_{\mathcal{H}(F)} = \|v^+ - v^-\|_{\mathcal{H}(F)}$ whenever $F$ is in the interior of some $D \in \mesh_H$. Thus,
\begin{multline*}
\sum_{F \in \mathbb{F}_h^I} \frac{1}{h} \|(v - \mathcal{R}_0^\intercal v_0)^+ - (v - \mathcal{R}_0^\intercal v_0)^-\|_{\mathcal{H}(F)}^2 + \sum_{F \in \mathbb{F}_h^B} \frac{1}{h} \|(v - \mathcal{R}_0^\intercal v_0)^+\|_{\mathcal{H}(F)}^2 \\
= \sum_{D \in \mesh_H} \left( \sum_{F \subset D} \|v^+ - v^-\|_{\mathcal{H}(F)} \right. \\
\left. + \sum_{F \in \partial D} \frac{1}{h} \|(v - \mathcal{R}_0^\intercal v_0)^+ - (v - \mathcal{R}_0^\intercal v_0)^-\|_{\mathcal{H}(F)}^2 \right. \\
\left.+ \sum_{F \subset \partial D \in \mathbb{F}_h^B} \frac{1}{h} \|(v - \mathcal{R}_0^\intercal v_0)^+\|_{\mathcal{H}(F)}^2 \right)\\
\le c a_h(v,v) + c \sum_{D \in \mesh_H} \frac{1}{h} \| v - \mathcal{R}_0^\intercal v_0 \|_{\mathcal{H}(\partial D)}^2.
\end{multline*}
Now using the trace inequality in equation \eqref{eqn:TraceInequality}, we obtain
\begin{align*}
\sum_{D \in \mesh_H} \frac{1}{h} \| v - \mathcal{R}_0^\intercal v_0 \|_{\mathcal{H}(\partial D)}^2 \le c \sum_{D \in \mesh_H} \frac{1}{h} \left[\frac{1}{H} \| v - \mathcal{R}_0^\intercal v_0 \|_{\mathcal{H}(D)}^2 + H \| v - \mathcal{R}_0^\intercal v_0 \|_{\mathcal{V}(D)}^2 \right].
\end{align*}
Also note that $\| v - \mathcal{R}_0^\intercal v_0 \|_{\mathcal{V}(D)}^2 = \| v \|_{\mathcal{V}(D)}^2$. Hence, applying the approximation result from equation \eqref{eqn:InterpolationEstimate} to $\| v - v_0 \|_{\mathcal{H}(D)}$ we obtain
\begin{align} \label{eqn:coarseestimate}
a_h(\mathcal{R}_0^\intercal v_0,\mathcal{R}_0^\intercal v_0) \le c \frac{H}{h} a_h(v,v),
\end{align}
therefore, using this result on equation \eqref{eqn:localcoarse} we see that $a_0(v_0,v_0) \le c a_h(v,v)$ and the result is achieved.
\end{proof}

\begin{lem}\label{lem:energystability}
\emph{Stable decomposition}. The spaces $V_j$ provide a stable decomposition of $V$, with respect to the bilinear form $\mathcal{A}_h(\cdot,\cdot)$, in the sense of assumption~\ref{ass:energystability}.
\end{lem}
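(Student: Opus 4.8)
The plan is to reuse verbatim the decomposition built in the proof of Lemma \ref{lem:laplacianstabledecomposition}, namely $v_0\in V_0$ the piecewise constant coarse average of $v$ and $v_j=(v-\mathcal{R}_0^\intercal v_0)|_{\kappa_j}$ for the cell-wise pieces, and to show that this decomposition stays stable when the reaction term is added to pass from $a_h$ to $\mathcal{A}_h$. Since $\mathcal{A}_j(v_j,v_j)=a_j(v_j,v_j)+\tfrac1\varepsilon(\boldsymbol\Sigma v_j,v_j)_{\mathcal{H}(\kappa_j)}$ and $\mathcal{A}_0(v_0,v_0)=a_0(v_0,v_0)+\tfrac1\varepsilon(\boldsymbol\Sigma v_0,v_0)_{\mathcal{H}}$, the diffusion contributions are already controlled by Lemma \ref{lem:laplacianstabledecomposition} together with $a_h(v,v)\le\mathcal{A}_h(v,v)$, which holds because $\boldsymbol\Sigma$ is positive semidefinite. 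Hence everything reduces to bounding the reaction contributions by $\mathcal{A}_h(v,v)$ with a constant that does not depend on $\varepsilon$.

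First I would use that the IP-DG reaction operator couples no neighboring cells. As the $\kappa_j$ partition $\Omega$ and $v-\mathcal{R}_0^\intercal v_0=\sum_{j\ge1}\mathcal{R}_j^\intercal v_j$, the cell-local reaction energies add up exactly,
\[
\sum_{j=1}^J \tfrac1\varepsilon(\boldsymbol\Sigma v_j,v_j)_{\mathcal{H}(\kappa_j)} = \tfrac1\varepsilon\bigl(\boldsymbol\Sigma(v-\mathcal{R}_0^\intercal v_0),v-\mathcal{R}_0^\intercal v_0\bigr)_{\mathcal{H}} =: |v-\mathcal{R}_0^\intercal v_0|_{\boldsymbol\Sigma}^2.
\]
Because $\boldsymbol\Sigma\succeq0$ pointwise, $|\cdot|_{\boldsymbol\Sigma}$ is a seminorm, so the triangle inequality gives $|v-\mathcal{R}_0^\intercal v_0|_{\boldsymbol\Sigma}^2\le 2|v|_{\boldsymbol\Sigma}^2+2|\mathcal{R}_0^\intercal v_0|_{\boldsymbol\Sigma}^2$.

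The one genuinely new estimate, and what I expect to be the crux, is to bound the reaction energy of the coarse average by that of $v$ itself, uniformly in $\varepsilon$ and in the spectrum of $\boldsymbol\Sigma$. Since $\boldsymbol\Sigma$ is symmetric positive semidefinite, the map $x\mapsto x^\intercal\boldsymbol\Sigma x$ is convex, so Jensen's inequality applied on each coarse cell $D$ to $\overline{v}_D=\tfrac1{\mathrm{meas}(D)}\int_D v\,dx$ yields $(\boldsymbol\Sigma\overline{v}_D)\cdot\overline{v}_D\le\tfrac1{\mathrm{meas}(D)}\int_D(\boldsymbol\Sigma v)\cdot v\,dx$; multiplying by $\mathrm{meas}(D)$ and summing over $D\in\mesh_H$ gives $|\mathcal{R}_0^\intercal v_0|_{\boldsymbol\Sigma}^2\le|v|_{\boldsymbol\Sigma}^2$. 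Combining with the seminorm bound, the cell-wise reaction energies are controlled by $4|v|_{\boldsymbol\Sigma}^2\le4\mathcal{A}_h(v,v)$, the last step again using $a_h(v,v)\ge0$. The same Jensen estimate bounds the reaction part of the coarse energy, $|\mathcal{R}_0^\intercal v_0|_{\boldsymbol\Sigma}^2\le\mathcal{A}_h(v,v)$, while its diffusion part obeys $a_0(v_0,v_0)\le c\,a_h(v,v)\le c\,\mathcal{A}_h(v,v)$, which is exactly the coarse estimate proved at the end of Lemma \ref{lem:laplacianstabledecomposition}.

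Finally I would assemble the pieces: adding the diffusion bound $\sum_{j=0}^J a_j(v_j,v_j)\le C_{V,\Delta}\,a_h(v,v)$ from Lemma \ref{lem:laplacianstabledecomposition} to the reaction bounds above yields
\[
\sum_{j=0}^J \|v_j\|_{\mathcal{A}_j}^2 \le C_V\,\mathcal{A}_h(v,v),\qquad C_V=C_{V,\Delta}+\mathcal{O}(1)=\mathcal{O}\!\left(\tfrac{H^2}{h^2}\right),
\]
which is precisely Assumption \ref{ass:energystability}. The delicate point throughout is $\varepsilon$-robustness: it is essential that neither the cell-wise splitting of the reaction energy nor the Jensen bound for the coarse average introduces any factor depending on $\varepsilon$ or on the eigenvalues of $\boldsymbol\Sigma$, and this is exactly what the cell-local structure of the reaction term and the convexity of its quadratic form provide.
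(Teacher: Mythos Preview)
Your argument is correct, but the paper takes a shorter path for the reaction term. Instead of bounding $|v-\mathcal{R}_0^\intercal v_0|_{\boldsymbol\Sigma}^2$ by the triangle inequality and then invoking Jensen to control $|v_0|_{\boldsymbol\Sigma}^2$, the paper simply observes that the reaction energies add up \emph{exactly}:
\[
\sum_{j=0}^J \tfrac1\varepsilon(\boldsymbol\Sigma v_j,v_j)_{\mathcal H}
=\tfrac1\varepsilon(\boldsymbol\Sigma v_0,v_0)_{\mathcal H}+\tfrac1\varepsilon(\boldsymbol\Sigma(v-v_0),v-v_0)_{\mathcal H}
=\tfrac1\varepsilon(\boldsymbol\Sigma v,v)_{\mathcal H},
\]
because $v_0$ is the $L^2$-projection onto coarse piecewise constants, so $(\boldsymbol\Sigma(v-v_0),v_0)_{\mathcal H}=0$ (the matrix $\boldsymbol\Sigma$ acts only on the group index). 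This yields immediately $\sum_j\mathcal A_j(v_j,v_j)\le\max\{C_{V,\Delta},1\}\,\mathcal A_h(v,v)$ with no extra constants. Your Jensen step is really a weaker consequence of the same $L^2$-orthogonality (the projection is a contraction in the $\boldsymbol\Sigma$-weighted $L^2$ norm), so you are using the right structure but extracting an inequality where an identity is available. Both arguments share the same implicit caveat that $\boldsymbol\Sigma$ is treated as spatially constant when passing through the averaging; neither your Jensen inequality nor the paper's orthogonality step is literally valid for variable $\sigma_{gg'}(x)$ without an additional remark.
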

\begin{proof}
Let $C_{V,\Delta}$ be the stable decomposition constant for the Laplacian, as deduced in lemma \ref{lem:laplacianstabledecomposition}, we then have
\begin{align*}
	\sum_{i=0}^J \mathcal{A}_j\left(v_j,v_j\right)
    &= \sum_{i=0}^J \left\{ a_j\left(v_j,v_j\right) + \frac{1}{\varepsilon} \left(\boldsymbol{\Sigma} v_j , v_i\right)_{\mathcal{H}} \right\}
    \\&= \sum_{i=0}^J a_j\left(v_j,v_j\right) + \frac{1}{\varepsilon} \left(\boldsymbol{\Sigma} v, v\right)_{\mathcal{H}}
    \\&\le C_{V,\Delta} a\left(v,v\right) + \frac{1}{\varepsilon} \left(\boldsymbol{\Sigma} v, v\right)_{\mathcal{H}}
    \\& \le \max\left\{ C_{V,\Delta},1 \right\} \mathcal{A}_h\left(v,v\right).
\end{align*}
It follows that the $V_j$ decomposition for our reaction-diffusion problem is \emph{energy stable} with $C_V = C_{V,\Delta} = \mathcal{O} \left(\frac{H^2}{h^2} \right)$ where $H$ and $h$ are the largest and smallest cell diameters respectively.
\end{proof}

\begin{lem}\label{prop:strengthenedcauchyschwarz}
There exists a strengthened Cauchy-Schwarz inequality in the sense of definition \ref{ass:strengthenedcauchyschwarz}.
\end{lem}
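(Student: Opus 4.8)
The plan is to exploit the local support of the subdomain corrections together with the locality of the IP-DG coupling. Since $\mathcal{A}_h(\cdot,\cdot)$ is symmetric and positive definite on $V_h$ (by the coercivity and continuity recorded after~\eqref{eqn:IPDiffusion}), the ordinary Cauchy-Schwarz inequality for the inner product it induces immediately produces constants $\theta_{ij}\in[0,1]$ with $\theta_{ij}\le 1$; hence the mere existence asserted in Assumption~\ref{ass:strengthenedcauchyschwarz} is automatic. The real content is to bound $\rho(\Theta)$ by a constant independent of $h$, $H$, and $\varepsilon$, since this is what makes the estimate of Theorem~\ref{thm:additiveestimate} useful.

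To that end I would determine which products $\mathcal{A}_h(\mathcal{R}_i^\intercal v_i,\mathcal{R}_j^\intercal v_j)$ can be nonzero for $i,j\in\{1,\dots,J\}$. Recall that $\mathcal{R}_j^\intercal$ is extension by zero, so $\mathcal{R}_i^\intercal v_i$ and $\mathcal{R}_j^\intercal v_j$ are supported on the single cells $\cell_i$ and $\cell_j$. The form $\mathcal{A}_h$ consists of cell volume integrals (the diffusion gradient term and the reaction term $\tfrac1\varepsilon(\boldsymbol{\Sigma}\,\cdot,\cdot)_{\mathcal{H}}$) together with face integrals over $\mathbb{F}_h^I\cup\mathbb{F}_h^B$. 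For $i\ne j$ the volume integrals vanish identically because the supports are disjoint cells; crucially this includes the reaction term, so $\varepsilon$ never enters the coupling. The only surviving contributions are the interior-penalty face integrals, built from the averaging operator $\{\!\!\{\cdot\}\!\!\}$ supported on a single face, which couple $\cell_i$ and $\cell_j$ only across a face they actually share. Consequently $\theta_{ij}=0$ unless $\cell_i=\cell_j$ or $\cell_i$ and $\cell_j$ are face-neighbors, so the sparsity pattern of $\Theta$ is precisely the cell-adjacency graph.

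Next I would invoke the mesh hypotheses — shape regularity and local quasi-uniformity of $\mesh_h$ — to conclude that each cell has at most a fixed number $N_c=N_c(d)$ of face-neighbors. Hence each row of $\Theta$ carries at most $N_c+1$ nonzero entries, each bounded by $1$. Applying Ger\v{s}gorin's circle theorem (equivalently the bound $\rho(\Theta)\le\|\Theta\|_\infty=\max_i\sum_j|\theta_{ij}|$, valid since $\Theta$ is symmetric) yields $\rho(\Theta)\le N_c+1$, a constant depending only on the spatial dimension and the shape-regularity parameters, and in particular independent of $h$, $H$, $\varepsilon$, and $J$.

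The coarse space $V_0$ carries global support and would couple to every subdomain, so I would keep it outside of $\Theta$ and account for its single correction through the additive ``$+1$'' already present in Theorem~\ref{thm:additiveestimate}, exactly as in the abstract framework of~\cite[\S2]{ToselliWidlund2005}. The one step requiring care is confirming that no term of $\mathcal{A}_h$ secretly couples non-adjacent cells — in particular that the singularly perturbed reaction term does not — since that is what guarantees both the clean adjacency sparsity and the $\varepsilon$-robustness of the bound; this I would verify explicitly as indicated above, after which the spectral-radius estimate follows.
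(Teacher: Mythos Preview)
Your proposal is correct and follows essentially the same route as the paper: Cauchy--Schwarz gives the existence of $\theta_{ij}\le 1$, locality of the IP-DG coupling forces $\theta_{ij}=0$ unless $\cell_i$ and $\cell_j$ share a face, and Ger\v{s}gorin bounds $\rho(\Theta)$ by one plus the maximum number of face-neighbors. Your explicit verification that the reaction term, being a pure volume integral, contributes no off-cell coupling (and hence introduces no $\varepsilon$-dependence into $\Theta$) is a point the paper's proof glosses over by writing $a_h$ rather than $\mathcal{A}_h$ in the key vanishing statement, so your argument is in fact slightly more complete.
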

\begin{proof}
(See \cite[\S4.2]{FengKarakashian2001}). Verifying this inequality consists of obtaining a bound for the spectral radius $\rho(\Theta)$ of the $J \times J$ matrix $\Theta = \left[\theta_{ij}\right]_{j=0}^J$.

That such values exist is a consequence of the Cauchy-Schwarz inequality. The important thing, however, is to obtain a small bound on $\rho$. To do so, we observe that $a_h(\mathcal{R}_i^\intercal v_i,\mathcal{R}_j^\intercal v_j) = 0$ if the supports of $v_i$ and $v_j$ do not share a face $f_{ij}$. For the remaining cases, we take $\theta_{ij}=1$. It follows at once from Gershgorin's circle theorem that
\begin{equation*}
\rho(\Theta) \le \max_m \text{card}\left\{k | f_{mk} \neq 0 \text{ almost everywhere} \right\} + 1 \hspace{1cm} f_{mk} \in {\mathbb{F}_h^I \cup \mathbb{F}_h^B}
\end{equation*}
i.e., $\rho(\Theta)$ is bounded by $1$ plus the maximum number of adjacent subdomains a given subdomain can have. In practice this number $4$ in 2D and $6$ in 3D. Even for ``unusual" subdomain partitions, this number is not expected to be large.
\end{proof}

\begin{lem}[Local Stability]\label{lem:localstability}
There holds
\begin{align*}
\mathcal{A}_h(\mathcal{R}_j^\intercal v_j,\mathcal{R}_j^\intercal v_j) \le \omega \mathcal{A}_j(v_j,v_j) & & \forall v_j \in V_j,
\end{align*}
where $\omega=\alpha \frac{H}{h}$ for $\alpha<1$.
\end{lem}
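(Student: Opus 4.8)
The plan is to separate each bilinear form into its diffusion part $a$ and its reaction part $\tfrac{1}{\varepsilon}(\boldsymbol{\Sigma}\,\cdot\,,\cdot)_{\mathcal H}$, and to exploit that the latter is a pure volume integral left unchanged by the zero-extension $\mathcal{R}_j^\intercal$. For any $v_j\in V_j$ the reaction contribution to $\mathcal{A}_h(\mathcal{R}_j^\intercal v_j,\mathcal{R}_j^\intercal v_j)$ equals that to $\mathcal{A}_j(v_j,v_j)$, since extending by zero does not alter the integral over the support $\cell_j$. Writing $R_j=\tfrac{1}{\varepsilon}(\boldsymbol{\Sigma} v_j,v_j)_{\mathcal H}\ge 0$ for this common term, the claim reduces to comparing the diffusion parts, while the nonnegative $R_j$ will later furnish the slack producing $\alpha<1$.

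For the fine subdomains $V_j$, $j\ge 1$, I would invoke the block-diagonal identity \eqref{eqn:BlockDiagInt}. For $v=\mathcal{R}_j^\intercal v_j$ supported on the single cell $\cell_j$, every off-diagonal term in $I(\mathcal{R}_j^\intercal v_j,\mathcal{R}_j^\intercal v_j)$ pairs $\cell_j$ against a neighbour on which the function vanishes, so $I\equiv 0$ and hence $a_h(\mathcal{R}_j^\intercal v_j,\mathcal{R}_j^\intercal v_j)=a_j(v_j,v_j)$. Combined with the matching reaction terms this yields local stability with constant $1$ on every fine cell, so these subdomains never limit $\omega$.

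The essential case is the coarse space $V_0$, where $\mathcal{A}_0$ carries the coarse penalty $\tfrac{\ip}{H}$ while $a_h(\mathcal{R}_0^\intercal v_0,\mathcal{R}_0^\intercal v_0)$ is evaluated with the fine penalty $\tfrac{\ip}{h}$. Here I would use the scaling identity \eqref{eqn:localcoarse}, rewritten as $a_h(\mathcal{R}_0^\intercal v_0,\mathcal{R}_0^\intercal v_0)=\tfrac{H}{h}\,a_0(v_0,v_0)$. With $R_0=\tfrac{1}{\varepsilon}(\boldsymbol{\Sigma} v_0,v_0)_{\mathcal H}\ge 0$ the quotient becomes
\begin{align*}
\frac{\mathcal{A}_h(\mathcal{R}_0^\intercal v_0,\mathcal{R}_0^\intercal v_0)}{\mathcal{A}_0(v_0,v_0)}
= \frac{\frac{H}{h}\,a_0(v_0,v_0)+R_0}{a_0(v_0,v_0)+R_0}
\le \frac{H}{h},
\end{align*}
the last step following from $R_0\ge 0$ and $\tfrac{H}{h}\ge 1$, with strict inequality whenever $R_0>0$. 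This is precisely the mechanism producing $\omega=\alpha\tfrac{H}{h}$ with $\alpha<1$, and under the standard refinement $\tfrac{H}{h}=2$ it keeps $\omega$ inside $[1,2)$ as demanded by assumption \ref{ass:localcompatibility}.

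The hard part will be the coarse scaling relation itself. Identity \eqref{eqn:localcoarse} was obtained for the piecewise-constant $v_0$ of lemma \ref{lem:laplacianstabledecomposition}; for genuinely $\mathbb{Q}_p$ coarse data one must verify that the fine faces interior to a coarse cell carry no jump of $v_0$, so that the volume and consistency integrals coincide in $a_0$ and in $a_h(\mathcal{R}_0^\intercal\,\cdot\,,\mathcal{R}_0^\intercal\,\cdot\,)$ and only the penalty integral rescales by exactly $\tfrac{H}{h}$. The remaining delicate point is the kernel of $\boldsymbol{\Sigma}$: there $R_0=0$ and the reaction slack vanishes, so strictness of $\alpha<1$ must instead be extracted either from the positive volume contribution of non-constant coarse modes, or by endowing the coarse form with a small safety factor in the penalty; in the hybrid and multigrid settings this edge case is in any event absorbed because the coarse correction is applied multiplicatively, leaving only the fine-cell constant $1$ to enter the additive stability estimate.
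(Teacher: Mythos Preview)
Your treatment of the fine subdomains and the basic comparison $a_h(\mathcal{R}_0^\intercal v_0,\mathcal{R}_0^\intercal v_0)\le \tfrac{H}{h}\,a_0(v_0,v_0)$ for the coarse space are in line with the paper. The genuine gap is your explanation of how $\alpha<1$ arises. You attribute it to the slack created by the nonnegative reaction term $R_0$, but this mechanism fails precisely on the kernel of $\boldsymbol{\Sigma}$, which by the standing assumptions of the paper is nontrivial: for any $v_0$ lying pointwise in $\ker\boldsymbol{\Sigma}$ one has $R_0=0$ and your quotient equals $\tfrac{H}{h}$ exactly, so no uniform $\alpha<1$ can be extracted this way. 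Your proposed remedies (volume contribution of non-constant modes, or an ad hoc safety factor in the penalty) do not yield a clean uniform constant either, and the remark about the multiplicative coarse correction absorbing the edge case is beside the point, since assumption~\ref{ass:localcompatibility} must hold for \emph{all} $v_j\in V_j$ before it is fed into the abstract theorems.

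The paper obtains $\alpha<1$ by a completely different and much simpler device: it \emph{introduces} $\alpha$ as an explicit relaxation parameter multiplying the local solvers, i.e.\ one replaces $\mathcal{A}_j$ by $\alpha^{-1}\mathcal{A}_j$ in the Schwarz preconditioner. With this rescaled local form the bound becomes $\mathcal{A}_h(\mathcal{R}_0^\intercal v_0,\mathcal{R}_0^\intercal v_0)\le \alpha\tfrac{H}{h}\,\bigl(\alpha^{-1}\mathcal{A}_0(v_0,v_0)\bigr)$ trivially, at the acknowledged cost of inflating the stable-decomposition constant $C_V$. The paper also notes that this relaxation is a proof artifact and is not needed in practice. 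So the $\alpha$ in the statement is not a consequence of the reaction structure at all; it is a free parameter chosen to force $\omega\in[1,2)$.
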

\begin{proof}
In the case of exact local solvers $\omega=1$, in our case the coarse bilinear form uses a penalty parameter depending on the cell diameter of the coarse mesh.
Observing the bilinear form \eqref{eqn:DGIPoriginal}, we see that for our coarse space bilinear form it holds
\begin{align*}
\mathcal{A}_h(\mathcal{R}_0^\intercal v_0,\mathcal{R}_0^\intercal v_0) \le \frac{H}{h} \mathcal{A}_0(v_0,v_0)
\end{align*}
and hence our local stability constant would be $\omega=\frac{H}{h}$, however, this would violate assumption \ref{ass:localcompatibility}. To remediate this, we scale the bilinear forms with a relaxation parameter $\alpha$ in order to accomplish the upper bound requested.

We can always introduce such a relaxation parameter but we are not free to scale the local bilinear forms arbitrarily in order to decrease $C_V$ from lemma \ref{lem:energystability}; a small value of $\omega$ means that corrections of the error are small. In such a case $C_V$ will necessarily be large (see \cite[p. 155]{SmithBjorstadGropp1996} and \cite[p. 41]{ToselliWidlund2005}).

Finally, we remark that this is only needed for our proofs, since in practice such relaxation parameter is not needed.
\end{proof}

\subsection{Multigrid V-cycle preconditioner}\label{sec:multigrid}
The preconditioners developed in the previous section are easily implemented as smoothers for multigrid preconditioners. In this section we provide convergence estimates for the multigrid $V$-cycle.

Let $\{\mesh\}_{\ell=0,\dots,L}$ be a hierarchy of meshes of quadrilateral and hexahedral cells in two and three dimensions, respectively. In view of multilevel methods, the index $\ell$ refers to the mesh level defined as follows: let a coarse mesh $\mesh_0$ be given. The mesh hierarchy is defined recursively, such that the cells of $\mesh_{\ell+1}$ are obtained by splitting each cell of $\mesh_\ell$ into $2^d$ children by connecting edge and face midpoints (refinement). These meshes are nested in the sense that every cell of $\mesh_\ell$ is equal to the union of its four (respectively eight) children. We define the mesh size $h_\ell$ as the maximum of the diameters of the cells of $\mesh_\ell$. Due to the refinement process, we have $h_\ell \approx 2^{-1} h_{\ell-1}$.

Due to the nestedness of mesh cells, the finite element spaces associated with these meshes are nested as well:
\begin{equation*}
V_0 \subset V_1 \subset \dots \subset V_L.
\end{equation*}
We introduce the $L^2$-projections $\mathcal{Q}_{\ell-1}$ and embedding operators $\mathcal{Q}_{\ell-1}^\intercal$
\begin{align*}
\mathcal{Q}_{\ell-1}&:V_{\ell} \rightarrow V_{\ell-1}, \\
\mathcal{Q}_{\ell-1}^\intercal&:V_{\ell-1} \rightarrow V_{\ell},
\end{align*}
such that
\begin{gather}
  \left(\mathcal{Q}_{\ell-1} v_{\ell},w_{\ell-1}\right)_{\mathcal{H}}
  = \left(v_{\ell},\mathcal{Q}_{\ell-1}^\intercal w_{\ell-1}\right)_{\mathcal{H}}
  \qquad \forall v_{\ell-1} \in V_{\ell-1}, w_{\ell-1} \in V_{\ell-1}
\end{gather}

Let $\mathcal{A}_{\ell}(\cdot,\cdot)$ be the bilinear form defined in equation~\eqref{eqn:IPDiffusion} on the mesh $\mesh_{\ell}$.
We define the operator $\mathcal{A}_{\ell}:V_{\ell} \longrightarrow V_{\ell}$ such that $\mathcal{A}_{\ell}(u_{\ell},v_{\ell}) = (\mathcal{A}_{\ell} u_{\ell}, v_{\ell})_{\mathcal{H}}$.

For the rest of the paper, we will redefine the operators $\mathcal{P}$ used in the 2-level analysis as follows: $\mathcal{P}_{\ell-1}$ is what used to be the coarse grid solver $\mathcal{P}_{0}$, while $\mathcal{P}_{\ell,j}$ represent the projections onto the subdomain spaces $V_j = V_{\ell,j}$ on mesh level $\ell$. There holds $\mathcal{A}_{\ell-1} \mathcal{P}_{\ell-1} = \mathcal{Q}_{\ell-1} \mathcal{A}_{\ell}$.

Let $\mathcal{B}_\ell$ be a smoother defined as the preconditioning operator on the preconditioned systems presented in \S\ref{SchwarzPreconditioners} without including the coarse space, i.e.,
\begin{align*}
\mathcal{B}_{\ell,\text{ad}} =& \sum_{i=1}^{N_\ell} \mathcal{P}_{\ell,i} \mathcal{A}_\ell^{-1} = \sum_{i=1}^{N_\ell} \mathcal{R}_{\ell,i}^\intercal \mathcal{A}_{\ell,i}^{-1} \mathcal{R}_{\ell,i}& \text{and} &&\mathcal{B}_{\ell,\text{mu}} =& \left(\mathcal{I} - \prod_{i=N_\ell}^{1} \mathcal{P}_{\ell,i} \right) \mathcal{A}_\ell^{-1}.
\end{align*}

We define the multigrid preconditioner $\mathcal{M}_L$ by induction. Let $\mathcal{M}_0 = \mathcal{A}_0^{-1}$. For $1 \le \ell \le L$ we define the action $\mathcal{M}_\ell g$ of $\mathcal{M}_\ell$ on a vector $g\in V_\ell$ in terms of $\mathcal{M}_{\ell-1}$:
\begin{enumerate}
\item Let $x_0 = 0$.
\item Define $x_i$ for $i=1,\dots,m$ by $m$ pre-smoothing steps
\begin{align*}
x_i = x_{i-1} + \mathcal{B}_\ell \left(g - \mathcal{A}_\ell x_{i-1} \right).
\end{align*}

\item Define $y_0$ by coarse grid correction
\begin{align*}
y_0 = x_m + \mathcal{Q}_{\ell-1}^\intercal \mathcal{M}_{\ell-1} \mathcal{Q}_{\ell-1} \left(g - \mathcal{A}_\ell x_m \right).
\end{align*}
\item define $y_i$ for $i=1,\dots,m$ by $m$ post-smoothing steps
\begin{align*}
y_i = y_{i-1} + \mathcal{B}_\ell \left(g - \mathcal{A}_\ell x_{i-1} \right).
\end{align*}
\item Let $\mathcal{M}_\ell g = y_{m}$.
\end{enumerate}
Our analysis of the multigrid algorithm follows~\cite{Duan2007}, since we have noninherited forms. There, convergence is proven in an abstract framework under the following three assumptions:
\begin{assumption}[Stability]\label{ass:mgstability}
There is a constant $C_Q > 0$ such that for all levels $\ell=2,\dots,L$ and all $v_\ell \in V_\ell$
\begin{gather}
\mathcal{A}_\ell\left(\left[\mathcal{I}_\ell - \mathcal{Q}_{\ell-1}^\intercal \mathcal{P}_{\ell-1}\right] v_\ell,\left[\mathcal{I}_\ell - \mathcal{Q}_{\ell-1}^\intercal \mathcal{P}_{\ell-1}\right] v_\ell\right) \le C_Q \mathcal{A}_\ell(v_\ell,v_\ell).
\end{gather}
\end{assumption}
\begin{assumption}[Regularity-approximation property]\label{ass:mgregularity}
There is a constant $C_1 > 0$, such that for all levels $\ell=2,\dots,L$ and all $v_\ell \in V_\ell$
\begin{gather}
\mathcal{A}_\ell\left(\left[\mathcal{I}_\ell - \mathcal{Q}_{\ell-1}^\intercal \mathcal{P}_{\ell-1}\right] v_\ell,v_\ell\right) \le C_1 \frac{\left\| \mathcal{A}_\ell v_\ell \right\|_{L^2}^2}{\Lambda_\ell}.
\end{gather}
where $\Lambda_\ell$ is the maximum eigenvalue of $\mathcal{A}_\ell$.
\end{assumption}
\begin{assumption}[Smoothing property]\label{ass:mgsmoothing}
There is a constant $C_R > 0$ such that for all levels $\ell=2,\dots,L$ and all $v_\ell \in V_\ell$
\begin{align*}
\frac{\left\|v_\ell \right\|_{L^2}^2}{\Lambda_\ell} \le C_R (\overline{R} v_\ell,v_\ell),
\end{align*}
where $\overline{R}=(\mathcal{I}-\mathcal{K}_\ell^2)\mathcal{A}_\ell^{-1}$ and $\mathcal{K}_\ell = \mathcal{I} - \mathcal{B}_\ell \mathcal{A}_\ell$.
\end{assumption}

From~\cite{Duan2007} we qoute the estimate for the error propagation operator defined as $\mathcal{I}-\mathcal{M}_\ell\mathcal{A}_\ell$.
\begin{thm}\label{thm:multigrid}
Let assumptions \ref{ass:mgstability}, \ref{ass:mgregularity} and \ref{ass:mgsmoothing} hold. Furthermore, assume $m>2 C_1 C_R$. Then, for all $\ell \ge 0$, there holds
\begin{align*}
\left|\mathcal{A}_\ell\left(\left[\mathcal{I}-\mathcal{M}_\ell\mathcal{A}_\ell\right]v_\ell,v_\ell \right)\right| \le c_\text{MG} \mathcal{A}_\ell(v_\ell,v_\ell), && \forall v_\ell \in V_\ell,
\end{align*}
with
\begin{align*}
c_\text{MG}=\frac{C_1 C_R}{m+C_1 C_R}
\end{align*}
for the two-level method, i.e., $\mathcal{P}_\text{hy}$, and 
\begin{align*}
c_\text{MG}=\frac{C_1 C_R}{m-C_1 C_R}
\end{align*}
for $L > 2$.

\end{thm}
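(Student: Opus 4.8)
The plan is to induct on the level $\ell$ for the error-propagation operator $\mathcal{E}_\ell := \mathcal{I}-\mathcal{M}_\ell\mathcal{A}_\ell$, after first turning the recursive definition of $\mathcal{M}_\ell$ into a recursion for $\mathcal{E}_\ell$. Writing $\mathcal{K}_\ell = \mathcal{I}-\mathcal{B}_\ell\mathcal{A}_\ell$ for the smoother error operator, the $m$ pre- and post-smoothing sweeps contribute factors $\mathcal{K}_\ell^m$ on each side, while the coarse-grid correction, rewritten with the identity $\mathcal{A}_{\ell-1}\mathcal{P}_{\ell-1}=\mathcal{Q}_{\ell-1}\mathcal{A}_\ell$ and $\mathcal{M}_{\ell-1}\mathcal{A}_{\ell-1}=\mathcal{I}_{\ell-1}-\mathcal{E}_{\ell-1}$, becomes $\mathcal{I}_\ell-\mathcal{Q}_{\ell-1}^\intercal\mathcal{M}_{\ell-1}\mathcal{Q}_{\ell-1}\mathcal{A}_\ell=(\mathcal{I}_\ell-\mathcal{Q}_{\ell-1}^\intercal\mathcal{P}_{\ell-1})+\mathcal{Q}_{\ell-1}^\intercal\mathcal{E}_{\ell-1}\mathcal{P}_{\ell-1}$. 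Hence
\begin{gather*}
\mathcal{E}_\ell=\mathcal{K}_\ell^m\Bigl[(\mathcal{I}_\ell-\mathcal{Q}_{\ell-1}^\intercal\mathcal{P}_{\ell-1})+\mathcal{Q}_{\ell-1}^\intercal\mathcal{E}_{\ell-1}\mathcal{P}_{\ell-1}\Bigr]\mathcal{K}_\ell^m,
\end{gather*}
with base case $\mathcal{E}_0=0$ coming from $\mathcal{M}_0=\mathcal{A}_0^{-1}$. With a symmetric smoother $\mathcal{E}_\ell$ is self-adjoint in the $\mathcal{A}_\ell$-inner product, a fact I would use repeatedly to move the outer $\mathcal{K}_\ell^m$ onto the test vector.

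Second, I would prove the two-level estimate, i.e. the case $\mathcal{E}_{\ell-1}=0$ (which is also $\mathcal{P}_\text{hy}$). Setting $w=\mathcal{K}_\ell^m v$, self-adjointness reduces the goal to bounding $\mathcal{A}_\ell((\mathcal{I}_\ell-\mathcal{Q}_{\ell-1}^\intercal\mathcal{P}_{\ell-1})w,w)$. Assumption~\ref{ass:mgregularity} bounds this by $C_1\|\mathcal{A}_\ell w\|_{L^2}^2/\Lambda_\ell$, and the smoothing decay is extracted by telescoping,
\begin{gather*}
\mathcal{A}_\ell(v,v)-\mathcal{A}_\ell(w,w)=\sum_{k=0}^{m-1}\bigl(\overline{R}\,\mathcal{A}_\ell\mathcal{K}_\ell^k v,\mathcal{A}_\ell\mathcal{K}_\ell^k v\bigr),
\end{gather*}
to which Assumption~\ref{ass:mgsmoothing} is applied termwise, giving $\|\mathcal{A}_\ell w\|_{L^2}^2/\Lambda_\ell\le (C_R/m)\bigl(\mathcal{A}_\ell(v,v)-\mathcal{A}_\ell(w,w)\bigr)$. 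Since $\mathcal{A}_\ell(w,w)\ge\mathcal{A}_\ell(\mathcal{E}_\ell v,v)$ (the coarse form is coercive), writing $X=\mathcal{A}_\ell(\mathcal{E}_\ell v,v)$ and $A=\mathcal{A}_\ell(v,v)$ yields the self-referential inequality $X\le (C_1C_R/m)(A-X)$, whose solution is precisely $X\le\frac{C_1C_R}{m+C_1C_R}A$.

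Third, for $L>2$ I would close the induction. Testing the recursion against $v$ and using $\mathcal{A}_{\ell-1}\mathcal{P}_{\ell-1}=\mathcal{Q}_{\ell-1}\mathcal{A}_\ell$ to pass the second summand to the coarse form gives
\begin{gather*}
\mathcal{A}_\ell(\mathcal{E}_\ell v,v)=\mathcal{A}_\ell\bigl((\mathcal{I}_\ell-\mathcal{Q}_{\ell-1}^\intercal\mathcal{P}_{\ell-1})w,w\bigr)+\mathcal{A}_{\ell-1}\bigl(\mathcal{E}_{\ell-1}\mathcal{P}_{\ell-1}w,\mathcal{P}_{\ell-1}w\bigr).
\end{gather*}
The first term is controlled as in the two-level step; the second is bounded in modulus by the induction hypothesis $c_{\ell-1}$ times $\mathcal{A}_{\ell-1}(\mathcal{P}_{\ell-1}w,\mathcal{P}_{\ell-1}w)=\mathcal{A}_\ell(\mathcal{Q}_{\ell-1}^\intercal\mathcal{P}_{\ell-1}w,w)$, and Assumption~\ref{ass:mgstability} is what returns this coarse energy to a multiple of $\mathcal{A}_\ell(v,v)$. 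Collecting the estimates produces a scalar recursion in the level constants $c_\ell$ whose fixed point is $\frac{C_1C_R}{m-C_1C_R}$; the hypothesis $m>2C_1C_R$ forces $C_1C_R/m<\tfrac12$, so this fixed point lies below $1$ and the recursion is a uniform contraction across all levels.

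The main obstacle is the \emph{noninherited} structure of the forms: because $\mathcal{A}_{\ell-1}$ is a rediscretization with its own penalty parameter rather than the restriction $\mathcal{A}_\ell(\mathcal{Q}_{\ell-1}^\intercal\,\cdot\,,\mathcal{Q}_{\ell-1}^\intercal\,\cdot\,)$, the operator $\mathcal{Q}_{\ell-1}^\intercal\mathcal{P}_{\ell-1}$ is not an $\mathcal{A}_\ell$-orthogonal projection, so the Pythagorean splitting of the energy that trivializes the inherited case is unavailable and the coarse-grid error genuinely accumulates from level to level. Taming this accumulation — keeping the per-level amplification a contraction rather than letting the constants grow — is exactly what the stability bound $C_Q$ of Assumption~\ref{ass:mgstability} and the factor $2$ in $m>2C_1C_R$ are for; the separation into the sharp two-level constant $\frac{C_1C_R}{m+C_1C_R}$ and the weaker multilevel constant $\frac{C_1C_R}{m-C_1C_R}$ is the visible trace of this difficulty, and the substantive effort upstream is verifying Assumptions~\ref{ass:mgstability}--\ref{ass:mgsmoothing} for the IP-DG forms on quadrilaterals and hexahedra, after which the abstract argument of~\cite{Duan2007} delivers the stated bounds.
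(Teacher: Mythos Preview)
Your proposal is correct and in fact goes well beyond what the paper does. The paper's own ``proof'' consists of a single sentence deferring to~\cite{Duan2007} for the abstract convergence result, followed by verification that Assumptions~\ref{ass:mgstability}--\ref{ass:mgsmoothing} hold for the IP-DG reaction--diffusion bilinear forms (citing~\cite{AntoniettiPennesi2018} and~\cite{Bramble1993} for the latter two and giving a short computation for the stability assumption yielding $C_Q=1$). You have instead sketched the content of the Duan--Gao--Tan argument itself: the error-propagation recursion, the two-level bootstrap via regularity--approximation plus telescoped smoothing, and the level-wise induction whose fixed point gives the multilevel constant. Your final paragraph correctly identifies that the noninherited forms are the obstruction and that, once the three assumptions are in hand, the abstract machinery of~\cite{Duan2007} finishes the job---which is precisely the paper's stance. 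So the approaches are compatible; yours simply unpacks the cited black box, while the paper treats Theorem~\ref{thm:multigrid} as a quotation and puts its effort into the hypothesis verification.
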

We refer to \cite{Duan2007} for the proof in an abstract setting, we show below that the assumptions apply to our method.

Assumption \ref{ass:mgregularity} is proven in \cite[Th. 9]{AntoniettiPennesi2018} and assumption \ref{ass:mgsmoothing} in \cite[Th. 5.1]{Bramble1993}.

To prove assumption \ref{ass:mgstability}, we use lemma \ref{lem:localstability} as follows
\begin{gather*}
\mathcal{A}_\ell\left(\mathcal{Q}_\ell^\intercal \mathcal{P}_{\ell-1} v_\ell,\mathcal{Q}_\ell^\intercal \mathcal{P}_{\ell-1} v_\ell\right) \le 2 \mathcal{A}_{\ell-1}\left(\mathcal{P}_{\ell-1} v_\ell,\mathcal{P}_{\ell-1} v_\ell\right) \\
\mathcal{A}_\ell\left(\mathcal{Q}_\ell^\intercal \mathcal{P}_{\ell-1} v_\ell,\mathcal{Q}_\ell^\intercal \mathcal{P}_{\ell-1} v_\ell\right) \le 2 \mathcal{A}_\ell\left(v_\ell,\mathcal{Q}_\ell^\intercal \mathcal{P}_{\ell-1} v_\ell\right) \\
\mathcal{A}_\ell\left(\mathcal{Q}_\ell^\intercal \mathcal{P}_{\ell-1} v_\ell,\mathcal{Q}_\ell^\intercal \mathcal{P}_{\ell-1} v_\ell\right) - 2 \mathcal{A}_\ell\left(v_\ell,\mathcal{Q}_\ell^\intercal \mathcal{P}_{\ell-1} v_\ell\right) + \mathcal{A}_\ell\left(v_\ell, v_\ell\right) \le \mathcal{A}_\ell\left(v_\ell, v_\ell \right), 
\end{gather*}
and we deduce
\begin{align}\label{eqn:CQ}
\mathcal{A}_\ell\left(\left[\mathcal{I} - \mathcal{Q}_{\ell-1}^\intercal \mathcal{P}_{\ell-1}\right] v_\ell,\left[\mathcal{I} - \mathcal{Q}_{\ell-1}^\intercal \mathcal{P}_{\ell-1}\right] v_\ell\right) \le \mathcal{A}_\ell\left(v_\ell, v_\ell \right),
\end{align}
hence assumption \ref{ass:mgstability} holds with $C_Q=1$.

We note that the preceding theorem requires $m>1$ for $L > 2$ but as we will see in the next section, $m=1$ suffices for our setting. For completeness, we provide results for $m>1$ as well.

\section{Numerical experiments}\label{sec:numericalexperiments}
Given that some of the smoothers and preconditioners we use are not symmetric, we use a GMRES solver for all our calculations.

\changed{All our experiments are performed on a unit square $\Omega = (0,1) \times (0,1)$, with Dirichlet boundary conditions. As our work is centered on the behavior of the system with respect to the reaction-term, we set all diffusion coefficients to one. Each mesh $\mesh_\ell$ consists of $2^\ell\times 2^\ell$ cells, such that each cell is a square with side $h_\ell=2^{-\ell}$. We use bilinear elements and $\delta_0=2$ and $h_\ell/h_{\ell-1}=1/2$ for all our experiments. When using multigrid V-cycles, the coarsest mesh consists of a single cell.}

\subsection{Poisson's equation}
As a baseline for further experiments we show the results for Poisson's equation using different preconditioners.
\begin{table}[ht!]
\centering
\caption{GMRES iterations for a DG discretization of Poisson's equation using tensor product polynomials of degree 1 and a unit source to reduce the residual by $10^{-8}$ for $\Sigma = 0$. Where U is unpreconditioned; 2AS, 2HS, 2MS are two-level additive, hybrid and multiplicative Schwarz respectively; MGAS, MGMS are multigrid with additive and multiplicative Schwarz smoothers respectively.}
\begin{tabular}{c|cccccc}
levels & U & 2AS & 2HS & 2MS & MGAS & MGMS \\
\hline
2 &  3      &  3 &  3 &  4 &  3 &  4 \\ 
3 &  10     & 10 &  6 &  6 &  6 &  6 \\ 
4 &  22     & 18 &  9 &  7 & 10 &  7 \\ 
5 &  43     & 24 & 11 &  7 & 12 &  8 \\ 
6 &  85     & 26 & 11 &  7 & 13 &  8 \\ 
7 &  $>100$ & 25 & 11 &  7 & 14 &  8 \\
8 &  $>100$ & 25 & 11 &  7 & 14 &  8 
\end{tabular}
\label{tab:unpreconditioned}
\end{table}

We observe that all preconditioners achieve a flat iteration count, albeit with different amount of iterations at very fine levels. Two-level additive Schwarz, for instance, requiring almost double the amount of iterations than multigrid with additive Schwarz preconditioners.

\subsection{2 groups}
In the case of a two group problem, because of the conservation condition of zero column sum and symmetry, all reaction matrices are multiples of
\begin{align*}
\boldsymbol{\Sigma} = \frac1{\varepsilon}\begin{pmatrix} 
1 & -1 \\ 
-1 & 1
\end{pmatrix}.
\end{align*}

We show iteration results in Table \ref{tab:G2S11a}.
\begin{table}
\centering
\caption[]{GMRES Iterations using a source $(1,0) \text{ or } (0,1)$  to reduce the residual by $10^{-8}$, where "max" is the maximum amount of iterations for different $\varepsilon$.}
\begin{tabular}{c|ccccc|c|c|c|c}
& \multicolumn{5}{c|}{MGAS} & MGMS & 2AS & 2HS & 2MS \\
\diagbox{levels}{$\varepsilon$} & 1.0 &  $10^{-1}$ & $10^{-2}$ & $10^{-3}$ & $10^{-4}$ & max & max & max & max \\
\hline
2 &  5 &  5 &  4  &  4  &  4  &  4 & 6  & 5 & 4\\
3 &  8 &  8 &  6  &  6  &  6  &  6 & 14  & 8 & 6 \\
4 &  10 &  10 &  10  & 10  & 10  &  7 & 22 & 10 & 7 \\
5 &  12 &  12 &  12  & 12  & 12  &  8 & 25 & 11 & 7 \\
6 &  13 &  13 &  13  & 13  & 13  &  8 & 25 & 11 & 7 \\
7 &  14 &  14 &  14  & 14  & 14  &  8 & 25 & 11 & 7 \\
8 &  14 &  14 &  14  & 14  & 14  &  8 & 25 & 11 & 7 \\
9 &  14 &  14 &  14  & 14  & 14  &  8 & 25 & 11 & 7
\end{tabular}
\label{tab:G2S11a}
\end{table}

We observe that the iteration count flattens for all methods considered, with very similar numbers to the pure Laplacian problem, indicating that the reaction operator does not affect the results shown in the previous section.
The fact that the results do not improve is explained by the reaction operator having a non-trivial kernel, where we effectively solve for the Laplacian.

\subsection{Multigroup}
We devise a reaction matrix with a \emph{contrast} between coefficients in different groups that is inversely proportional to different powers of $\varepsilon$ as follows:
\begin{align*}
\boldsymbol{\Sigma} = \begin{pmatrix} 
\changed{\Sigma_{1,1}} & -1 & -\varepsilon^{-1} & -\varepsilon^{-2} & -\varepsilon^{-3} & \dots\\ 
-1 & \changed{\Sigma_{2,2}} & -1 & -1 & -1 & \dots\\
-\varepsilon^{-1} & -1 & \changed{\Sigma_{3,3}} & -\varepsilon^{-1} & -\varepsilon^{-2} & \dots\\
-\varepsilon^{-2} & -1 & -\varepsilon^{-1} & \changed{\Sigma_{4,4}} & -\varepsilon^{-1} & \dots\\
-\varepsilon^{-3} & -1 & -\varepsilon^{-2} & -\varepsilon^{-1} & \changed{\Sigma_{5,5}} & \dots\\
\vdots & \vdots & \vdots & \vdots & \vdots & \ddots
\end{pmatrix}
\end{align*}
where \[\Sigma_{\gi,\gi} = -\displaystyle\sum_{\gj \neq \gi} \Sigma_{\gi,\gj} = 1 + \varepsilon^{-1} + \varepsilon^{-2} + \varepsilon^{-3} + \dots\]. We remark that the elements in the diagonal are such that the matrix has zero column sum. We use the top left $5\times5$-block of this matrix as the reaction matrix in the following tests.

Results are shown in Table \ref{tab:G5a}, tests were performed for the sources $(1,0,1,0,1)$, $(0,1,0,1,0)$, $(0,1,1,1,0)$ and $(1,0,0,0,1)$ and we report the maximum iteration count encountered. In this case the columns are shown only up to $\varepsilon=0.01$ to avoid floating point underflow problems. Note, that this involves values of $\varepsilon^{-3}=10^{-6}$.
\begin{table}[ht!]
\centering
\caption[]{GMRES Iterations to reduce the residual by $10^{-8}$ for a 5 groups calculation, where "max" is the maximum amount of iterations over the values of $\varepsilon$ in the left columns.}
\begin{tabular}{c|ccc|c|c|c|c}
& \multicolumn{3}{c|}{MGAS} & MGMS & 2AS & 2HS & 2MS\\
\diagbox{levels}{$\varepsilon$} & 1.0 &  0.1 &  0.01 & max & max & max & max \\
\hline
2 &  5 &  5 &  4  & 4 & 9  & 5 & 4   \\
3 &  8 &  7 &  6  & 6 & 15 & 8 & 6  \\
4 &  10 &  10 &  10 & 7 & 22  & 10 & 7   \\
5 &  12 &  12 &  12 & 8 & 25  & 11 & 7   \\
6 &  13 &  13 &  13 & 8 & 26  & 11 & 7   \\
7 &  14 &  14 &  14 & 8 & 25  & 11 & 7   \\
8 &  14 &  14 &  14 & 8 & 25  & 11 & 7   \\
9 &  14 &  14 &  14 & 8 & 25  & 11 & 7   \\
\end{tabular}
\label{tab:G5a}
\end{table}

It can be observed, that the iteration count flattens as in the other cases, the performance of the method is unaffected by the increase in the amount of groups or their different scaling.

We also show the results for the use of more than 1 pre and post smoothing in Table \ref{tab:G5s}.
\begin{table}
\centering
\caption[]{GMRES Iterations to reduce the residual by $10^{-8}$ for a 5 groups calculation, with different amount of smoothings per level, where "max" is the maximum amount of iterations for different $\varepsilon$.}
\begin{tabular}{c|ccc|ccc|ccc|}
& \multicolumn{3}{c|}{$2$ smoothings} & \multicolumn{3}{c|}{$4$ smoothings} & \multicolumn{3}{c|}{$8$ smoothings} \\
\diagbox{levels}{$\varepsilon$} & 1.0 &  0.1 &  0.01 & 1.0 &  0.1 &  0.01 & 1.0 &  0.1 &  0.01 \\
\hline
2 &  4 &  3 &  3  &  3  &  2  &  2  &  2  &  2  &  2  \\
3 &  5 &  5 &  5  &  4  &  4  &  4  &  3  &  3  &  3  \\
4 &  7 &  7 &  7  &  5  &  5  &  5  &  4  &  4  &  4  \\
5 &  8 &  8 &  8  &  6  &  6  &  6  &  5  &  5  &  5  \\
6 &  9 &  9 &  9  &  7  &  7  &  7  &  6  &  6  &  6  \\
7 &  9 &  9 &  9  &  7  &  7  &  7  &  7  &  7  &  7  \\
8 &  9 &  9 &  9  &  7  &  7  &  7  &  6  &  7  &  7  \\
9 &  9 &  9 &  9  &  6  &  7  &  7  &  6  &  7  &  7      
\end{tabular}
\label{tab:G5s}
\end{table}

We see an improvement in the iteration count, always flattening, that becomes less significant as the amount of smoothing iterations increases, suggesting that there is a sweet spot to be found with regards to the computational cost.

\subsection{Space dependent reaction matrix}
We modify the matrix used in the previous section scaling it with the following function only depending on space:
\begin{align*}
f_i(x,y) = 
\begin{cases}
(x,y) \in \boldsymbol\Omega_i & \sin^2(2\pi x)\sin^2(2\pi y) \\
(x,y) \notin \boldsymbol\Omega_i & 0
\end{cases}
\end{align*}
where $\boldsymbol\Omega_i$, with $i=0,1,2,3$ are the four quadrants of the square domain. Note that these results in reaction and diffusion dominated regions and inertial subspaces in group space depending on the spatial coordinates.
\begin{align*}
\boldsymbol{\Sigma} = \begin{pmatrix} 
\displaystyle \changed{\Sigma_{1,1}} & -f_0 & -\varepsilon^{-1}f_1 & -\varepsilon^{-2}f_2 & -\varepsilon^{-3}f_3 & \dots\\ 
-f_0 & \changed{\Sigma_{2,2}} & -f_0 & -f_0 & -f_0 & \dots\\
-\varepsilon^{-1}f_1 & -f_0 & \changed{\Sigma_{3,3}} & -\varepsilon^{-1}f_1 & -\varepsilon^{-2}f_2 & \dots \\
-\varepsilon^{-2}f_2 & -f_0 & -\varepsilon^{-1}f_1 & \changed{\Sigma_{4,4}} & -\varepsilon^{-1}f_1 & \dots \\
-\varepsilon^{-3}f_3 & -f_0 & -\varepsilon^{-2}f_2 & -\varepsilon^{-1}f_1 & \changed{\Sigma_{5,5}} & \dots \\
\vdots & \vdots & \vdots & \vdots & \vdots & \ddots
\end{pmatrix}
\end{align*}
Results are shown in Table \ref{tab:G5b} for different source terms as in the previous section. In this case the columns are shown only up to $\varepsilon=0.01$ to avoid a floating point underflow.

\begin{table}[ht!]
\centering
\caption[]{GMRES Iterations to reduce the residual by $10^{-8}$ for a 5 groups calculation, where "max" is the maximum amount of iterations for different $\varepsilon$.}
\begin{tabular}{c|ccc|c|c|c|c}
& \multicolumn{3}{c|}{MGAS} & MGMS & 2AS & 2HS & 2MS\\
\diagbox{levels}{$\varepsilon$} & 1.0 &  0.1 &  0.01 & max & max & max & max \\
\hline
2 &  6 &  7 &  6  & 4 & 19 & 7 & 4 \\
3 &  9 &  10 &  9  & 6 & 22 & 10 & 6 \\
4 &  11 &  12 &  12  & 7 & 25 & 11 & 7 \\
5 &  13 &  13 &  13  & 8 & 27 & 12 & 8 \\
6 &  13 &  14 &  14  & 8 & 28 & 12 & 8 \\
7 &  14 &  14 &  15  & 8 & 28 & 13 & 8 \\
8 &  14 &  15 &  15  & 9 & 27 & 12 & 8 \\
9 &  14 &  15 &  15  & 9 & 27 & 12 & 8 \\
10 &  15 &  15 &  15  & 9 & 27 & 12 & 8 \\
11 &  15 &  15 &  15  & 9 & 27 & 12 & 8 \\
12 &  15 &  15 &  15  & 9 & 27 & 12 & 8
\end{tabular}
\label{tab:G5b}
\end{table}

We see that once again, we achieve a flat iteration count, with a slightly larger absolute value for the finest meshes. The reaction term does not affect the convergence of the method, even when the reaction coefficients vary in space, as well as between groups.

\section{Conclusions}
We introduced a domain decomposition smoother based on the solution of the complete reaction-diffusion system on each cell of the mesh in the fashion of additive or multiplicative nonoverlapping Schwarz methods. We prove that these smoothers produce two-level and multilevel preconditioners performing robustly with respect to mesh size and parameters of the equation. Our numerical experiments confirm the robustness and show, that the obtained iteration counts are indeed low, and thus the methods very efficient.

\FloatBarrier
\bibliography{symmetric_diffusion_paper}{}
\bibliographystyle{siam}
\end{document}